\theoremstyle{plain}
\numberwithin{equation}{section}
\newtheorem{thm}{Theorem}[section]
\newtheorem{theorem}[thm]{Theorem}
\newtheorem{conj}[thm]{Conjecture}
\DeclareMathOperator{\Reverse}{Reverse}
\newcommand{\eqn}[1]{(\ref{#1})}
\newcommand{\beql}[1]{\begin{equation}\label{#1}}
\newcommand{\eeq}{\end{equation}}
\newcommand{\sC}{{\mathcal C}}
\newcommand{\sG}{{\mathcal G}}
\newcommand{\sP}{{\mathcal P}}
\newcommand{\sQ}{{\mathcal Q}}
\begin{document}


\setcounter{page}{1}


\title[2178 And All That]{2178 And All That}
\author{N. J. A. Sloane}
\address{The OEIS Foundation\\
                11 South Adelaide Avenue\\
                Highland Park, NJ 08904, USA}
\email{njasloane@gmail.com}

\begin{abstract}
For integers $g \ge 3$, $k \ge 2$, call a number $N$ a
$(g,k)$-\textit{reverse multiple} if the reversal of $N$ in
base $g$ is equal to $k$ times $N$.
The numbers $1089$ and $2178$ are the two smallest $(10,k)$-reverse multiples,
their reversals being $9801 = 9 \cdot 1089$ and $8712 = 4 \cdot 2178$.
In 1992, A. L. Young introduced certain trees in order to study the
problem of finding all $(g,k)$-reverse multiples.
By using modified versions of her trees, which we call \textit{Young graphs},
we determine the possible values of $k$ for bases $g = 2$ through $100$, 
and then show how to apply the transfer-matrix method to enumerate the $(g,k)$-reverse
multiples with a given number of base-$g$ digits.  
These Young graphs are interesting finite
directed graphs, whose structure is not at all well understood.
\end{abstract}

\maketitle

\section{Introduction}\label{Sec1}
For integers $(g,k)$ with $g \ge 3$, $2 \le k <g$, 
call a number $N$ a ($g,k$)-\textit{reverse multiple}
if the reversal of the base-$g$ expansion of $N$ is the base-$g$ expansion of 
the product $kN$. 
The decimal numbers 1089 and 2178 are the two smallest
(10,$k$)-reverse multiples, their reversals being respectively 
$9801 = 9 \cdot 1089$ and $8712 = 4 \cdot 2178$.
There are no other 4-digit examples in base 10.   
In 1940, G. H. Hardy \cite{Hardy} famously remarked that the existence of 
these two numbers was ``likely to amuse amateurs'',
but was not of interest to mathematicians, since this result is
``not capable of any significant generalization''.

It seems fair to say that Hardy was wrong, since references
\cite{GrBa75},
\cite{IDM},
\cite{Kacz68},
\cite{KlSm69},
\cite{Pud07},
\cite{Sut66},
\cite{WeWi13},
\cite{Young1}, \cite{Young2}
discuss generalizations.
References
\cite{Ball},
\cite{Dorrie},
\cite{MMS78},
\cite{Wells}
also mention the problem.
The bibliography lists all the articles or books known to
the author that discuss this topic.  
There may well be other references, since---partly no doubt
because of Hardy's comment---\textit{Mathematical Reviews}
does not cover this subject, and the author
would appreciate hearing about them.  A more appropriate title
for the present paper would have been ``1089 and all that'',
but this was already in use \cite{1089}, prompted
by another interesting property of 1089,
namely that if one takes any three-digit decimal number $abc$ with $a > c+1$,
$c \ne 0$, and performs the successive operations of reverse, subtract,
reverse, add, the result is always 1089.
For generalizations of this property, see \cite{Web95}.

Hardy's book was published in 1940, but apparently it was not
until 1966 that the problem was taken up again, by Alan Sutcliffe \cite{Sut66}.
His paper and subsequent papers by 
Kaczynski \cite{Kacz68},
Klosinski and Smolarski \cite{KlSm69},
Grimm and Ballew \cite{GrBa75}, and
Pudwell \cite{Pud07}
concentrate on finding all $(g,k)$-reverse multiples 
in arbitrary bases $g$ with a specified (and small)
number of digits.
These papers demonstrate that this is a fairly difficult problem,
which even for two- or three-digit numbers
is still not completely solved (see, for example, the table
on page 286 of \cite{Sut66}).

In 1992, Young \cite{Young1}, \cite{Young2}
introduced certain trees in order to study the problem
of finding all $(g,k)$-reverse multiples for a fixed base $g$.
The present paper extends her work.
We replace her trees with certain finite directed graphs that we refer to as 
``Young graphs''. (`Young diagram'' would
have been a better name, but that term is already in use.)
Once one has the Young graph for a particular pair $(g,k)$, 
it is easy to generate as many examples
of $(g,k)$-reverse multiples as one wishes. 
It is also easy to program a computer to determine
whether the Young graph exists, and hence to find,
for any given value of $g$, the possible values of the multiplier
$k$ (see Table \ref{gkTable} in \S\ref{Sec4} for bases $3 \le g \le 20$).
Furthermore, by applying the transfer-matrix method 
from combinatorics \cite[\S4.7]{Stanley}
to the Young graph, one can obtain a generating function
for the number of $(g,k)$-reverse multiples with a given number of digits. 

In the base-10 case, it is well known that if a 
$(10,k)$-reverse multiple exists 
then $k$ must be 4 or 9. 
The papers by 
Klosinski and Smolarski \cite{KlSm69},
Grimm and Ballew \cite{GrBa75}, and very recently 
Webster and Williams \cite{WeWi13}
show how to find all solutions in this case.
This is especially easy to do using the $(10,4)$ and $(10,9)$
Young graphs (see Figs. \ref{Fig10_4} and \ref{Fig10_9}).

Incidentally, it seems that none of the above authors
noticed that the $n$-digit reverse multiples in the base 10 case
(and in many other cases) are essentially enumerated by
the Fibonacci numbers (see \eqn{GF109e}--\eqn{Eq10} below;
the first mention of this fact appears to have been
by D.~W.~Wilson \cite{Wils97} in 1997, in a comment on
one of the sequences in \cite{OEIS}).

Another property that these authors overlooked
is that in many (but not all) cases the $(g,k)$-reverse multiples
are precisely the numbers $\gamma \beta$,
where $\gamma$ is a constant (depending on $g$ and $k$) and $\beta$
ranges over all numbers whose base-$g$ expansion 
is palindromic, with a restricted set of digits, and satisfies
certain simple rules. In the case of the $(10,9)$-reverse multiples,
for example, $\gamma = 99$ and $\beta$ is any positive
palindromic decimal number whose digits are 0 or 1
and whose decimal expansion does not contain any singleton 0's or 1's
(again this was first noticed by Wilson \cite{Wils97}). 
Similarly, the $(18,7)$-reverse multiples are the numbers $\gamma \beta$
where $\gamma = (1,17,3,5,12,13)_{18}$ and $\beta$ is any
positive number whose base-18 expansion is palindromic, contains only the
digits 0 or 1, and does not contain any run of 0's of length less than 3 
or any pair of adjacent 1's (see \S\ref{Sec4}).
On the other hand, the $(24,17)$- and $(40,13)$-reverse multiples are
not palindromic in this sense (see the last two examples in \S\ref{Sec4}).

The Young graphs are defined in \S\ref{Sec2},
and the transfer-matrix method is described in \S\ref{Sec3}.
Sections \ref{Sec34}, \ref{Sec35}, \ref{Sec36}
discuss three particular families of Young graphs: 
the ``1089'' graph shown in
Figs. \ref{Fig10_4}, \ref{Fig10_9}, which appears to occur
if and only if $k+1$ divides $g$, and is consequently
the most common Young graph; the
complete graphs $K_m$ (see Figs. \ref{Fig5_2}, \ref{Fig11_3});
and the cyclic graphs $Z_m$ illustrated in Figs. \ref{FigCyclic},
\ref{Fig18_7}.
Section \ref{Sec34} explains why Fibonacci numbers
arise from the 1089 graph.
The computer-generated results for bases $g \le 20$ appear in \S\ref{Sec4}
(see especially Table \ref{gkTable}, which lists all Young graphs with $g \le 20$).
The final section lists several open problems.


\vspace*{+.1in}
\noindent
\textbf{Notation.}
A nonnegative number $N$ with base-$g$ expansion
\beql{Eq1}
N ~=~ \sum_{i=0}^{n-1} a_i g^i
\eeq
($0 \le a_i <g$, $a_{n-1} \ne 0$) will
also be written as 
\beql{Eq2}
N ~=~ (a_{n-1}, a_{n-2}, \ldots, a_1, a_0)_g \,.
\eeq
We refer to the $a_i$ as ``digits'', even if $g \ne 10$,
and say that $N$ has ``length'' $n$.
For the convenience of readers who wish to refer to Young's papers,
for the most part this paper uses her notation.

\section{Young Graphs}\label{Sec2}

\subsection{The equations.}
We assume always that $g \ge 3$, $2 \le k < g$.
Suppose $N$, given by \eqn{Eq1} and \eqn{Eq2}, is
a $(g,k)$-reverse multiple, so that
\beql{Eq3}
kN ~=~ (a_0, a_1, \ldots, a_{n-2}, a_{n-1})_g \,.
\eeq
This implies that $a_0 \ne 0$ (since $kN > N$) and
\begin{align}\label{Eq4}
~ & ka_0 \qquad \qquad = a_{n-1} + r_0 \, g, \notag\\
~ & ka_1+ r_0 \qquad = a_{n-2} + r_1 \, g, \notag\\
~ & \qquad \qquad \ldots \qquad \notag\\
~ & ka_i+ r_{i-1} \quad \; = a_{n-1-i} + r_i \, g, \notag\\
~ & \qquad \qquad \ldots \qquad \notag\\
~ & ka_{n-2} + r_{n-3} = a_{1} + r_{n-2} \, g, \notag\\
~ & ka_{n-1} + r_{n-2} = a_0, 
\end{align}
where the ``carry'' digits $r_0, \ldots, r_{n-2}$ satisfy
$0 \le r_i < g$.
Young's approach \cite{Young1}, \cite{Young2} proceeds as follows.
Equations  \eqn{Eq4} imply that
\beql{Eq5}
r_0 ~>~0, \quad r_i ~<~ k \mbox{~for~} i=0,\ldots,n-2
\eeq
(the latter inequality has an easy proof by induction).
We also set $r_{-1}=r_{n-1}=0$.
The equations can be combined in pairs, the $i$-th pair being
\begin{align}\label{Eq6}
~ & ka_i+ r_{i-1} \quad \qquad = a_{n-1-i} + r_i \, g, \notag\\
~ & ka_{n-1-i} + r_{n-2-i} = a_{i} + r_{n-1-i} \, g, 
\end{align}
for $i=0, 1, \ldots, \lfloor \frac{n}{2} \rfloor$. 
If $n$ is odd, the final pair consists of two identical equations.

These equations are solved recursively. 
For $i=0, 1, \ldots$, we suppose we have found
\beql{Eq6a}
a_0, \ldots, a_{i-1}, \quad a_{n-i}, \ldots, a_{n-1}, \quad r_0, \ldots, r_{i-1},
\quad r_{n-1-i}, \ldots, r_{n-2},
\eeq
and we attempt to find all possibilities for the four unknowns
\beql{Eq6b}
a_i, \; a_{n-1-i}, \; r_i, \; r_{n-2-i}.
\eeq
From Eq. \eqn{Eq6} we have 
\begin{align}\label{EqY}
~ & k\boldsymbol{a_i} + r_{i-1} ~\equiv~ \boldsymbol{a_{n-1-i}} \pmod{g}, \notag\\
~ & 0 ~\le~ \boldsymbol{a_i} + r_{n-1-i} \, g - k\boldsymbol{a_{n-1-i}} ~<~ k.
\end{align}
where the unknowns $a_i$ and $a_{n-1-i}$ 
are shown in bold face for emphasis. 
By direct search, one finds all pairs $a_i, a_{n-1-i}$ with
$0 \le a_i <g$, $0 \le a_{n-1-i} <g$ that satisfy \eqn{EqY}.
For each such pair, define
\begin{align}\label{Eq8}
~ & r_i ~=~ \frac{ka_i+ r_{i-1}  - a_{n-1-i}}{g}, \notag\\
~ & r_{n-2-i} ~=~ a_i + r_{n-1-i} \, g - k a_{n-1-i}. 
\end{align}

\subsection{The graph $H(g,k)$.}
We keep a record of the solutions in the form of a finite, directed graph.
Young uses a potentially infinite tree for this,
but a finite graph is more suitable
for enumerating the solutions.
The equations \eqn{EqY} involve $r_{i-1}$ and $r_{n-1-i}$,
but do not involve any of the other variables in \eqn{Eq6a} that 
we are assuming are known. So we take the nodes
of the graph to be the ordered pairs $[r_{n-1-i},r_{i-1}]$.
There are potentially $k^2$ nodes $[r', r]$,
with $0 \le r' < k$, $0 \le r < k$.
For each solution \eqn{Eq6b}, we draw a directed edge from node
$[r_{n-1-i},r_{i-1}]$ to node $[r_{n-2-i},r_{i}]$,
with edge-label $(a_{n-1-i}, a_i)$:
$$
[r_{n-1-i},r_{i-1}] \xrightarrow{(a_{n-1-i}, a_i)} [r_{n-2-i},r_i].
$$
There may be zero, one or several solutions \eqn{Eq6b}, and so
zero, one or several edges emanating from a node.

It may happen that either $a_i$ or $a_{n-1-i}$ (or both) are $0$ 
in a solution to \eqn{EqY}. 
This is allowed only if $i \ne 0$ (since
$N$ may not begin or end with $0$).
To handle this problem, we introduce a special {\em starting node}
$[[0,0]]$ (written with double brackets, and indicated in the drawings by
a slightly larger black circle), where we start the recursion, and
which has the property that no edge emanating from
it can have the label $(0,0)$.
The starting node has no incoming edges, by definition.
If the starting node is included, there are a maximum of
$k^2+1$ nodes in the Young graph. However, it is more natural to
ignore the starting node and the edges emanating from it when 
discussing these graphs. The $(8,5)$ Young graph in Fig.
\ref{Fig8_5}, for example, is best described as an 8-node,
16-edge graph.

The graph is then constructed by beginning at the starting node 
(with $i=0$), and solving \eqn{EqY} recursively, until
we have found all edges that emanate from every node we have encountered.
If $(g,k)$-reverse multiples exist, an internal (non-starting) node $[0,0]$ 
will always occur.
The graph may also contain loops (a loop is an edge
directed from a node to itself). However, there is
at most one edge directed from any one node to another (see \S\ref{Sec27}).

Let $H(g,k)$ denote the resulting directed graph.
This is not yet the Young graph, for there is one further
condition that must be satisfied.  

\subsection{Young's theorem.}
The following is a restatement of
the main result of Young \cite{Young1}.

\begin{theorem}\label{Th1}
(i) A $(g,k)$-reverse multiple exists if and only if the graph $H(g,k)$
contains either a node $[r, r]$ with $r \ne 0$, or
a pair of nodes $[r', r] \rightarrow [r, r']$
joined by an edge, with $r' \ne r$. \\
(ii) The $(g,k)$-reverse multiples with an even 
number $n=2t$  of digits
are in one-to-one correspondence with paths
that go from the starting node
to a node $[r, r]$, where now $r$ may be $0$,
as shown in Fig. \ref{Fig63}. \\
(iii) The $(g,k)$-reverse multiples with an odd 
number $n=2t+1$  of digits
are in one-to-one correspondence with paths that 
\textbf{either} 
go from the starting node to the first of a pair of 
adjacent nodes $[r', r] \rightarrow [r, r']$ with 
$r' \ne r$, as shown in Fig. \ref{Fig73},
\textbf{or} 
go from the starting node to a node $[r, r]$ with 
a loop, where again $r$ may be $0$, as shown in Fig. \ref{Fig72}.
\end{theorem}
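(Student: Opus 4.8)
The plan is to prove parts (ii) and (iii) first, by unwinding the recursion into a walk in $H(g,k)$, and then to deduce part (i) from them together with the fact that, by construction, every node of $H(g,k)$ is reachable from the starting node. The basic dictionary is this: a length-$n$ reverse multiple is the same data as digits $a_0,\dots,a_{n-1}$ with $a_0,a_{n-1}\ne 0$ and $0\le a_i<g$, together with carries $r_0,\dots,r_{n-2}$ with $0\le r_i<k$, subject to $r_{-1}=r_{n-1}=0$ and the equations \eqn{Eq4}. The recursion \eqn{EqY}--\eqn{Eq8} solves these ``from the two ends inward'': at step $i$ the known data is exactly \eqn{Eq6a}, the current node is $[r_{n-1-i},r_{i-1}]$, and each admissible pair $(a_i,a_{n-1-i})$ produces one outgoing edge to $[r_{n-2-i},r_i]$ with label $(a_{n-1-i},a_i)$. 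First I would verify that the inequalities \eqn{Eq5} make every carry a legal node-coordinate, that the first edge-label is forced to be nonzero by the starting-node convention (this encodes $a_0,a_{n-1}\ne 0$), and that reading the labels back off a walk recovers all $n$ digits, since each label supplies one digit from each half. This makes ``reverse multiple of length $n$'' synonymous with ``walk of length $\lceil n/2\rceil$ from the starting node to its centre,'' and the remaining content of (ii) and (iii) is to identify that centre.

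For the centre I would split on parity. When $n=2t$, the last step $i=t-1$ pairs the two central equations, which share the single carry $r_{t-1}$; both formulas of \eqn{Eq8} then compute this carry, and they agree precisely when the arrival node $[r_{n-2-i},r_i]$ is diagonal, of the form $[r,r]$. Hence even solutions correspond bijectively to walks ending at a diagonal node, with $r$ possibly $0$, which is (ii). When $n=2t+1$ the central step $i=t$ is the lone middle equation $(k-1)a_t+r_{t-1}=r_t g$, which is an edge $[r_t,r_{t-1}]\to[r_{t-1},r_t]$ carrying the diagonal label $(a_t,a_t)$: a loop when $r_t=r_{t-1}$ (centre a diagonal node) and an edge across a reversed pair $[r',r]\to[r,r']$ with $r'\ne r$ otherwise. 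This gives (iii). Both parts are then careful index bookkeeping rather than any real difficulty.

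For (i), the key structural remark is that $H(g,k)$ is built by exploring outward from the starting node, so every node it contains is reachable. The implication $(\Leftarrow)$ is then immediate: a node $[r,r]$ with $r\ne 0$ lies on a walk from the start and closes to an even solution by (ii), while a reversed pair $[r',r]\to[r,r']$ with $r'\ne r$ closes to an odd solution by (iii). For $(\Rightarrow)$, any solution has, by (ii) and (iii), a centre that is a diagonal node or a loop/reversed-pair; if that centre is a nonzero diagonal or a distinct pair we are done. The only cases left are an even solution whose central carry is $0$ and an odd solution whose central loop sits at $[0,0]$, and in both the centre is the node $[0,0]$ itself.

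The hard part is precisely these residual cases: one must show that whenever the interior node $[0,0]$ is reachable, $H(g,k)$ \emph{also} contains a nonzero diagonal node or a distinct reversed pair. I would attack this using two facts. First, $r_0>0$ always (from \eqn{Eq5}), so the first interior node on every solution walk has positive second coordinate; in particular a length-$2$ solution already has nonzero central carry, and no solution reaches $[0,0]$ in a single step. Second, $H(g,k)$ admits the involutive anti-automorphism $\sigma\colon[x,y]\mapsto[y,x]$ that reverses each edge and swaps its label---$[p,q]\xrightarrow{(c,d)}[p',q']$ is an edge iff $[q',p']\xrightarrow{(d,c)}[q,p]$ is---under which diagonal nodes and reversed-pair edges are exactly the fixed structures. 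The plan is to take a minimal solution with centre $[0,0]$ and use $\sigma$ together with $r_0>0$ to locate, on a reflection of its walk, a $\sigma$-fixed node or edge other than $[0,0]$; such a nontrivial fixed structure is the required nonzero diagonal or distinct pair. I expect this to be the main obstacle, since the central carry really can vanish at the purely local level, so excluding $[0,0]$ as the sole certificate must be a global argument driven by minimality and the symmetry $\sigma$, not by inspecting a single equation.
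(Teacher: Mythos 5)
The paper itself contains no proof of Theorem \ref{Th1}: it is presented as a restatement of Young's result, with the proof deferred to Theorems 1--3 and Corollaries 1--2 of \cite{Young1}. So your write-up has to stand on its own, and most of it does, in substance following the same route those cited results take. Your unwinding of \eqn{Eq6}--\eqn{Eq8} from the two ends, making a length-$n$ solution synonymous with a walk from the starting node, is correct; for even $n=2t$ the two formulas in \eqn{Eq8} compute the single unknown $r_{t-1}$, so consistency is exactly arrival at a diagonal node, and for odd $n$ the degenerate middle pair is an edge $[r_t,r_{t-1}]\to[r_{t-1},r_t]$; the ``if'' half of (i) then follows because every node of $H(g,k)$ is reachable from the start by construction. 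One point you should make explicit for the converse reading of (iii): every edge $[r',r]\to[r,r']$ must carry a label of the form $(a,a)$, or no middle digit could be read off it; subtracting the two equations defining such an edge gives $(k+1)(a-b)=0$, so this is automatic but needs saying.

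The gap you flag in the ``only if'' half of (i) is real, and your proposed repair is circular. A node of the reflected walk is $\sigma(u_j)$, and it is $\sigma$-fixed precisely when $u_j$ is diagonal; a reflected edge is $\sigma$-fixed precisely when the original edge is a reversed pair. So ``locate a $\sigma$-fixed node or edge on the reflection of the walk'' is word for word the assertion that the original walk to $[0,0]$ already passes through a nonzero diagonal node or a reversed-pair edge---the very thing to be proved. The symmetry cannot produce it, because a walk from the start to $[0,0]$ need not coincide with its own reflection-reversal, and only for such self-symmetric walks does a fixed node or edge appear (at the midpoint). Minimality does dispose of the odd residual case: the loop at $[0,0]$ is forced to have label $(0,0)$ (its defining equations give $a=kb$ and $b=k^{2}b$), so deleting the middle digit $0$ of an odd solution centred there yields a solution one digit shorter. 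But that lands you exactly in the even residual case, which is the true core: a minimal solution of length $2t$ with $r_{t-1}=0$ splits as $N=XY$ with $kY=\Reverse(X)$ and $kX=\Reverse(Y)$ as $t$-digit strings, and one must show this situation still forces the required node or edge---for instance by proving that the halves coincide, so that $Y$ is itself a shorter reverse multiple, a contradiction. That is an arithmetic fact about base-$g$ digit strings, not a formal consequence of the graph symmetry $\sigma$, and nothing in your sketch supplies it; it is precisely the work covered by the paper's citation of Young's Corollaries 1 and 2.
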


\begin{figure}[!h]
\centerline{\includegraphics[width=5.5in]{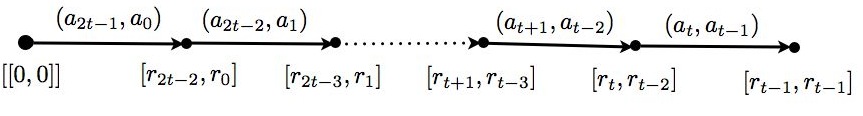}}
\caption{There is a one-to-one correspondence between these paths in the Young
graph and reverse multiples with an even number
of digits; $[r_{t-1},r_{t-1}]$ is an even pivot node.}
\label{Fig63}
\end{figure}
\begin{figure}[!h]
\centerline{\includegraphics[width=5.5in]{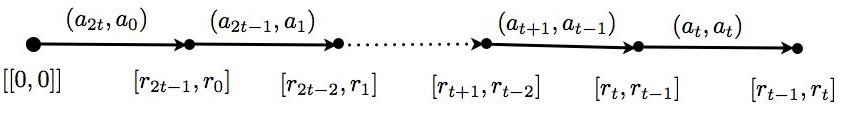}}
\caption{There is a one-to-one correspondence between
the paths shown in this figure and Fig. \ref{Fig72}
and reverse multiples with an odd number
of digits. Here $r_{t} \ne r_{t-1}$;
$[r_{t},r_{t-1}]$ is an odd pivot node.}
\label{Fig73}
\end{figure}
\begin{figure}[htb]
\centerline{\includegraphics[width=5.5in]{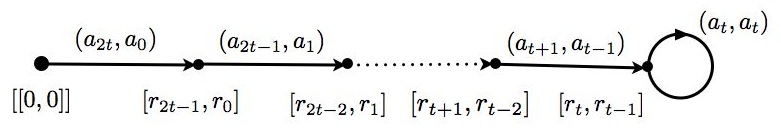}}
\caption{Here $r_t = r_{t-1}$; $[r_{t},r_{t-1}]$ is an odd pivot node.}
\label{Fig72}
\end{figure}

We refer to \cite{Young1} (Theorems 1, 2, 3 and Corollaries 1, 2) for the proof.
We will call any node $[r, r]$ (including the non-starting node $[0,0]$)
an {\em even pivot node}, and
any node $[r, r]$ with a loop (again $r$ may be 0)
or the initial node $[r', r]$ of a pair $[r', r] \rightarrow [r, r']$
with $r' \ne r$, an {\em odd pivot node}.
For examples, see Figs. \ref{Fig10_4}, \ref{Fig10_9}, \ref{Fig8_5},
\ref{Fig11_7}, \ref{Fig14_3}.

\subsection{Young graphs.}
We can now construct the $(g,k)$ Young graph.
For it to exist, the graph $H(g,k)$ must contain
at least one (even {\em or} odd) nonzero pivot node. If so,
there may be some nodes (``dead ends'') 
which can be reached from the starting node but which
do not lead to any pivot node.
These nodes and all edges leading to
them are now removed (or ``pruned'') from 
$H(g,k)$: the result is the $(g,k)$ Young graph.

Figures \ref{Fig10_4} and \ref{Fig10_9}
show the $(10,4)$ and $(10,9)$ Young graphs.
Here no pruning is necessary: these are the graphs
$H(10,4)$ and $H(10,9)$.
On the other hand, the graph $H(8,3)$ (not shown) has seven nodes.
One of them, however, has three incoming edges and no outgoing edges,
and another has two incoming edges and no outgoing edges.
When these two nodes and the five edges are removed, the
result, the $(8,3)$ Young graph, is the same graph 
that underlies Figs. \ref{Fig10_4}
and \ref{Fig10_9} (with yet a third set of labels).
$H(12,7)$ (also not shown) is an example of a graph which contains no pivot nodes, 
so the $(12,7)$ Young graph does not exist.

\begin{figure}[htb]
\begin{minipage}[b]{0.45\linewidth}
\centering
\includegraphics[width=0.6\textwidth]{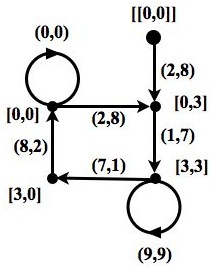}
\caption{The $(10,4)$ Young graph.
$[0,0]$ is both an even pivot node and an odd pivot node; so is $[3,3]$.}
\label{Fig10_4}
\end{minipage}
\hspace{0.5cm}
\begin{minipage}[b]{0.45\linewidth}
\centering
\includegraphics[width=0.6\textwidth]{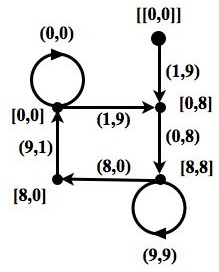}
\caption{The $(10,9)$ Young graph.
$[0,0]$ is both an even pivot node and an odd pivot node; so is $[8,8]$.}
\label{Fig10_9}
\end{minipage} 
\end{figure}


\subsection{Correspondence between paths and $(g,k)$-reverse multiples.}
Once we have the $(g,k)$ Young graph and a list of
the pivot nodes, equations \eqn{Eq4} and \eqn{Eq6}
imply that the correspondence between the 
paths of the three forms shown in Figs. \ref{Fig63}, \ref{Fig73}, \ref{Fig72}
and the $(g,k)$-reverse multiples is as follows.

\textbf{The path in Fig. \ref{Fig63}.}
The right-most $t$ digits $a_0, \ldots, a_{t-1}$
of the corresponding $2t$-digit number $N$ (see \eqn{Eq1})
are obtained by reading the right-hand labels on the edges
as we proceed from the starting node $[[0,0]]$,
following the arrows until we reach the pivot node $[r_{t-1}, r_{t-1}]$.
The remaining $t$ digits $a_{t}, \ldots, a_{2t-1}$ of $N$
are obtained by reading the left-hand labels on the edges
as we return to the starting node, going against the arrows,
and exactly retracing the forward path.

Similarly, the check digits $r_{-1}=0, r_0, \ldots, r_{t-1}$
are obtained by reading the right-hand node labels
along the path from the starting node to the pivot node,
and the check digits $r_{t-2}, \ldots, r_{2t-2}$, $r_{2t-1}=0$
by reading the left-hand node labels
as we return to the starting node going against the arrows.
We use only one copy of the pair of identical labels $r_{t-1}$
on the pivot node.

\begin{figure}[htb]
\centerline{\includegraphics[width=2.5in]{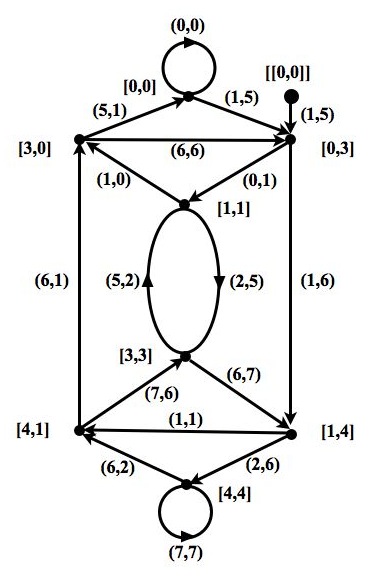}}
\caption{The $(8,5)$ Young graph.
Even pivot nodes: $[0,0]$, $[1,1]$, $[3,3]$, $[4,4]$;
odd pivot nodes: $[0,0]$, $[1,4]$, $[3,0]$, $[4,4]$.}
\label{Fig8_5}
\end{figure}

For example, consider the case $g=8$, $k=5$.
The $(8,5)$ Young graph is shown in Fig. \ref{Fig8_5}.
(In this case $H(8,5)$ {\em is} the Young graph.)
Look at the path 
$[[0,0]] \rightarrow [0,3] \rightarrow [1,1] \rightarrow [3,3]$
to the pivot node $[3,3]$. 
Reading along the edge labels gives the number
$N = (1,0,2,5,1,5)_8$, and we verify that this is an $(8,5)$-reverse multiple
by multiplying it by 5 mod 8:
\begin{center}
$$
\begin{tabular}{ l c p{.001in} p{.001in} p{.001in} p{.001in}  p{.001in}  p{.001in}  p{.001in} }
$N$ & =       &   & 1 & 0 & 2 & 5 & 1 & 5 \\
    &         &   &   &   &   &   & $\times$ & 5 \\
\hline
$5N$ & =      &   & 5 & 1 & 5 & 2 & 0 & 1 \\
\hline
carries  & =  & 0 & 0 & 1 & 3 & 1 & 3 & 0 \\
\end{tabular}
$$
\end{center}
The seven carry digits $r_{-1} = 0, r_0 = 3, r_1 = 1,
\ldots, r_4 = 0, r_5 = 0$ (reading
from right to left) are shown at the bottom of the tableau,
and match the node labels along the path.

\textbf{The path in Fig. \ref{Fig73}.}
Again, we first read the right-hand edge labels
$a_0, \ldots, a_{t-1}$ on the path from the starting node 
to the pivot node $[r_{t}, r_{t-1}]$.
Then we take $a_t$ from the last edge in Fig. \ref{Fig73},
and the remaining digits $a_{t+1}, \ldots, a_{2t}$
are obtained by reading the left-hand labels on the edges
as we return to the starting node, going against the arrows.
We only take one copy of $a_t$ from the last edge in Fig. \ref{Fig73}.

The check digits $r_{-1}=0, \ldots, r_{t-1}, r_t, \ldots, r_{2t}=0$
are obtained by reading the right-hand node labels
along the path from the starting node to the pivot node,
and then reading the left-hand node labels
as we return to the starting node going against the arrows.
The final node in Fig. \ref{Fig73} is not used when we record the check digits.

For example, consider the path
$[[0,0]] \rightarrow [0,3] \rightarrow [1,4] \rightarrow [4,1]$
in Fig. \ref{Fig8_5} where $[1,4]$ is the pivot node.
Reading along the edge labels gives the number
$N = (1,1,1,6,5)_8$, and again we verify that this is an $(8,5)$-reverse multiple:
\begin{center}
$$
\begin{tabular}{ l c p{.001in} p{.001in} p{.001in} p{.001in}  p{.001in}  p{.001in}  p{.001in} }
$N$ & =       &  & 1 & 1 & 1 & 6 & 5 \\
    &         &  &   &   &   & $\times$ & 5 \\
\hline
$5N$ & =      &  & 5 & 6 & 1 & 1 & 1  \\
\hline
carries  & =  & 0 & 0 & 1 & 4 & 3 & 0 \\
\end{tabular}
$$
\end{center}
The six carry digits $r_{-1} = 0, r_0 = 3, r_1 = 4,
r_2 = 1, r_3 = 0, r_4 = 0$ (reading
from right to left) are shown at the bottom of the tableau,
and match the node labels along the path (ignoring the node $[4,1]$).

\textbf{The path in Fig. \ref{Fig72}.}
Once again, we first read the right-hand edge labels
$a_0, \ldots, a_{t-1}$ on the path from the starting node
to the pivot node $[r_{t}, r_{t-1}]$.
Then we take $a_t$ from the loop,
and the remaining digits $a_{t+1}, \ldots, a_{2t}$
are obtained by reading the left-hand labels on the edges
as we return to the starting node, going against the arrows.
We only take one copy of $a_t$ from the loop.

Again, the check digits $r_{-1}=0, \ldots, r_{t-1}, r_t, \ldots, r_{2t}=0$
are obtained by reading the right-hand node labels
along the path from the starting node to the pivot node,
and then reading the left-hand node labels
as we return to the starting node going against the arrows.

We give two examples.
(i) In Fig. \ref{Fig8_5} consider the path
$[[0,0]] \rightarrow [0,3] \rightarrow [1,4] \rightarrow [4,4]$
to the pivot node $[4,4]$, using the loop at $[4,4]$ once.
Reading along the edge labels gives the number
$N = (1,1,2,7,6,6,5)_8$, and we verify that this is an $(8,5)$-reverse multiple:
\begin{center}
$$
\begin{tabular}{ l c 
p{.001in} p{.001in} p{.001in} p{.001in} p{.001in}  p{.001in}  p{.001in}  p{.001in} }
$N$ & =       &   & 1 & 1 & 2 & 7 & 6 & 6 & 5 \\
    &         &   &   &   &   &   &   & $\times$ & 5 \\
\hline
$5N$ & =      &   & 5 & 6 & 6 & 7 & 2 & 1 & 1 \\
\hline
carries  & =  & 0 & 0 & 1 & 4 & 4 & 4 & 3 & 0 \\
\end{tabular}
$$
\end{center}
The eight carry digits $r_{-1} = 0, r_0 = 3, r_1 = 4,
\ldots, r_5 = 0, r_5 = 0$ (reading
from right to left) are shown at the bottom of the tableau,
and match the node labels along the path.
Of course we could have traversed the loop at $[4,4]$ any number of times: this
would simply have increased the number of 7's in the middle of $N$.

(ii) Here is a more elaborate example,
to illustrate the use of the (non-starting) node $[0,0]$ as
the pivot node.
Consider the following path in Fig. \ref{Fig8_5} from the starting node  $[[0,0]]$
to the pivot node $[0,0]$:
$$
[[0,0]] \rightarrow
[0,3] \rightarrow
[1,4] \rightarrow
[4,4] \rightarrow
[4,1] \rightarrow
[3,0] \rightarrow
[0,0] .
$$
We then follow the loop at $[0,0]$ once.
Reading along the edge labels gives the number
$N = (1,1,2,6,6,5,0,1,1,2,6,6,5)_8$,
and again we verify that this is an $(8,5)$-reverse multiple:
\begin{center}
$$
\begin{tabular}{ l c 
p{.001in} p{.001in} p{.001in} p{.001in}  p{.001in}  p{.001in}  p{.001in} 
p{.001in} p{.001in} p{.001in} p{.001in}  p{.001in}  p{.001in}  p{.001in} }
$N$ & =  & &1&1&2&6&6&5&0&1&1&2&6&6&5 \\
    &    & & & & & & & & & & & & &$\times$&5 \\
\hline
$5N$& =  & &5&6&6&2&1&1&0&5&6&6&2&1&1 \\
\hline
carries&=&0&0&1&4&4&3&0&0&0&1&4&4&3&0 \\
\end{tabular}
$$
\end{center}

Returning to the base-10 case, by following the paths from 
the starting node in Fig. \ref{Fig10_4}
to the two pivot nodes, we obtain all the $(10,4)$-reverse multiples,
which are:
\beql{A008918}
2178, 21978, 219978, 2199978, 21782178, 21999978, 217802178, 219999978, \ldots,
\eeq
and from Fig. \ref{Fig10_9} the $(10,9)$-reverse multiples:
\beql{A001232}
1089, 10989, 109989, 1099989, 10891089, 10999989, 108901089, 109999989, \ldots  .
\eeq
See entries A008918, A001232, A008919 in 
the {\em On-Line Encyclopedia of Integer Sequences} \cite{OEIS}
for more terms.
(Six-digit numbers prefixed by A will
always refer to entries in \cite{OEIS}.)
We return to the discussion of these numbers in \S\ref{Sec32} and \S\ref{Sec34}.

\subsection{Summary.}
The $(g,k)$-reverse multiples $N$ 
with an {\em even} number of digits are
in one-to-one correspondence with paths in the $(g,k)$ Young
graph that go from the
starting node $[[0,0]]$ to a pivot node $[r,r]$ by following
the arrows, and then return to the starting node by exactly
retracing the path, only now going against the arrows.
The path may go through intermediate nodes (including pivot nodes that are 
not acting as pivots) any number of times,
and may traverse edges any number of times.

The $(g,k)$-reverse multiples $N$ with an {\em odd} number of digits
have a similar description, only now the outward path ends at
either a pivot node which is 
the first of a pair of adjacent nodes $[r', r] \rightarrow [r, r']$ 
(with $r' \ne r$)
or a pivot node $[r,r]$ with a loop.

The resulting paths of course can get very complicated. The
great merit of the transfer-matrix method (see \S\ref{Sec3}) is that 
it makes it easy to count the paths 
of any length and so to find the number of reverse multiples with 
any given number of digits.

\subsection{Properties of Young graphs.}\label{Sec27}
The following are some useful properties of Young graphs.
Properties (P1) and (P3) were (essentially)
stated and proved by Young \cite{Young1}, \cite{Young2}.

(P1) The label on an edge is determined by the labels
on its two end-nodes. For we can solve \eqn{Eq6} to obtain:
\begin{align}\label{Eq93.5}
a_i &~=~ 
\frac{k r_i g - k r_{i-1} + r_{n-1-i} g - r_{n-2-i}}{k^2-1}, \nonumber \\
a_{n-1-i} &~=~ 
\frac{k r_{n-1-i} g - k r_{n-2-i} + r_{i} g - r_{i-1}}{k^2-1}.
\end{align}

(P2) There is at most one edge between any two nodes. This
follows from (P1).

(P3) If the graph contains an edge 
$[r,s] \xrightarrow{(a,b)} [t,u]$
then it also contains nodes $[s,r]$ and $[u,t]$ and an edge 
$[u,t] \xrightarrow{(b,a)} [s,r]$.
Sketch of proof: 
Let $N = (a_{n-1}, \ldots, a_0)_g$ be a $(g,k)$-reverse multiple for
which the corresponding path contains the first edge.
Then $(a_{n-1}, \ldots, a_0, a_{n-1}, \ldots, a_0)_g$ 
is also a $(g,k)$-reverse multiple, and its path consists of
the path for $N$ followed by that same path with all 
the node-labels, edge-labels and arrows reversed, and
therefore contains the second edge.

(P4) The graph always contains a (non-starting) node $[0,0]$.
This follows from (P3), since we have the node $[[0,0]]$.

(P5)  If the graph contains an edge
$[r',r] \xrightarrow{(a,b)} [r,r']$ with $r' \ne r$, or
if a node $[r,r]$ has a loop with label $(a,b)$,
then $a=b$. Again this follows at once from (P3).

(P6) It follows from the definition of the Young graph 
that the node labels are all distinct. 
The edge labels are also all distinct. For suppose
on the contrary that the same label
appeared on two different edges:
$$
[r_{n-1-i},r_{i-1}] \xrightarrow{(a_{n-1-i}, a_i)} [r_{n-2-i},r_i],
\quad
[s_{n-1-i},s_{i-1}] \xrightarrow{(a_{n-1-i}, a_i)} [s_{n-2-i},s_i].
$$
From \eqn{Eq93.5} we have
\begin{gather}
k r_i g - k r_{i-1} + r_{n-1-i} g - r_{n-2-i}
=
k s_i g - k s_{i-1} + s_{n-1-i} g - s_{n-2-i}, \notag \\
k r_{n-1-i} g - k r_{n-2-i} + r_{i} g - r_{i-1}
=
k s_{n-1-i} g - k s_{n-2-i} + s_{i} g - s_{i-1}.\notag
\end{gather}
If we collect terms, and subtract the second equation from 
$k$ times the first equation, we obtain
$g(r_i-s_i) = r_{i-1}-s_{i-1}$. If $r_i \ne s_i$, the left-hand
side is greater than $g$ in magnitude, while the right-side is
not. So $r_i=s_i$, $r_{i-1}=s_{i-1}$. Similarly we obtain
$r_{n-1-i}=s_{n-1-i}$
and
$r_{n-2-i}=s_{n-2-i}$. 

(P7) Sutcliffe \cite[Theorem~2]{Sut66} shows that for $g \ge 4$,
there is a two-digit $(g,k)$-reverse multiple for some $k$
if and only if $g+1$ is composite. This corresponds to an edge 
$[0,0] \xrightarrow{(b,a)} [r,r]$ with 
$a = r(kg-1)/(k^2-1)$, $b = r(g-k)/(k^2-1)$.
(See also Conjecture \ref{Conj24} in \S\ref{Sec35}.)

\subsection{Further remarks.}\label{SecRem}
(i) The only difference between the $(10,4)$
and $(10,9)$ Young graphs (shown in Figs. \ref{Fig10_4} and 
\ref{Fig10_9}) is in the labels on the nodes and edges. The underlying
abstract directed graphs are the same and the pivot nodes
are the same. In such a case we
say that the two Young graphs are {\em isomorphic}
(see also the first open question in \S\ref{Sec5}).

(ii) Some of the references mention that fact that in base 10,
reverse multiples are never prime (for example,
$1089 = 3^2 \times 11^2$).
In fact, no $(g,k)$-reverse multiple $N$ can ever be prime,
for if we add all the
equations \eqn{Eq4} and read the result mod $g-1$, we find that
$(k-1) \sum_{i=0}^{n-1} a_i \equiv 0 \pmod{g-1}$.
This implies that $N$ (see \eqn{Eq1}) is a multiple of $g-1$
(in the case $g=10$, this is ``casting out 9's'').


\section{Counting Reverse Multiples}\label{Sec3}
\subsection{Adjacency matrices.}\label{Sec32}
Let $c_{t}$ denote the number of $(g,k)$-reverse multiples with $t$ digits,
with generating function
\beql{EqGF3}
\sC (x) \, =\,  \sum_{t \ge 0} c_{t} \, x^{t},
\eeq
and let
$\sP (x) \, =\, \sum_{t \ge 0} c_{2t} \, x^{2t}$,
$\sQ (x) \, =\, \sum_{t \ge 0} c_{2t+1} \, x^{2t+1}$, 
with
$\sC (x) \, =\,  \sP(x)\, +\,  \sQ(x)$.


We will calculate these generating functions
using the transfer-matrix method \cite[\S4.7]{Stanley}.
Suppose the $(g,k)$ Young graph has $v$ nodes (including
the starting node) and $e$ edges.
We label the nodes $V_0, V_1, \ldots, V_{v-1}$,
where we take $V_0$ to be the starting node $[[0,0]]$
and $V_1$ to be the node $[0,0]$.
We then construct a $v \times v$ adjacency matrix $A$
for the graph, by setting $A\left|_{i,j} \right.$
equal to $x^2$ if there is a directed edge from $V_i$ to $V_j$, and 0 otherwise
(the entry is $x^2$ rather than $x$, because each edge
is used twice, once in the direction of the arrow,
once going against the arrow).

The key observation (this is the essence
of the transfer-matrix method) is that if
$V_i= [r,r]$ is a pivot node, the 
$(V_0, V_i)$ entry of $A^t$ will be $mx^{2t}$ if there are $m$
paths of length $t$ from the starting node to $V_i$,
which by Theorem \ref{Th1} 
means that there are $m$ $(g,k)$-reverse multiples
of length $2t$ for which the corresponding
path has pivot node $V_i$.

The sum of the entries $(V_0, V_i)$ in $A^t$ over all pivot nodes $V_i = [r,r]$
is therefore equal to $c_{2t} x^{2t}$.
Summing on $t$, we see that the generating function $\sP (x)$ is the
sum over all pivot nodes $V_i = [r,r]$ of 
the $(V_0, V_i)$ entries in the matrix
$$
B ~=~ A + A^2 + A^3 + \cdots ~=~ A(I-A)^{-1}.
$$

Similarly, $c_{2t+1} x^{2t+1}$ is equal to $x$ times the sum
of the entries $(V_0, V_i)$ in $A^t$ 
for which $V_i$ is a pivot node for a reverse multiple
with an odd number of digits
(meaning $V_i$ is either a node $[r,r]$ with a loop
or the first of a pair of adjacent nodes $[r', r] \rightarrow [r, r']$
with $r' \ne r$).
The generating function $\sQ (x)$ is the
sum of the $(V_0, V_i)$ entries in $B$
over all such pivot nodes.

For example, consider the $(10,9)$ Young graph in
Fig. \ref{Fig10_9}. Let $V_2$, $V_3$, $V_4$
denote the nodes $[0,8]$, $[8,8]$, $[8,0]$ respectively.
The adjacency matrix $A$ is
$$
\left[\begin{array}{ccccc}
0 & 0 & x^2 & 0 & 0 \\
0 & x^2 & x^2 & 0 & 0 \\
0 & 0 & 0 & x^2 & 0 \\
0 & 0 & 0 & x^2 & x^2 \\
0 & x^2 & 0 & 0 & 0 \\
\end{array}
\right]\,.
$$
The pivot nodes $[r,r]$ are $V_1$ and $V_3$, and any computer algebra
system will tell us that 
\begin{align}
B\left|_{0,1} \right. ~=~ & \frac{x^4(1-x^2)}{1-2x^2+x^4-x^8},  \notag \\
B\left|_{0,3} \right. ~=~ & \frac{x^8}{1-2x^2+x^4-x^8},
\notag
\end{align}
whose sum is the generating function for
$(10,9)$-reverse multiples with an even number of digits:
\begin{align}\label{GF109e}
\sP (x) & ~=~ \frac{x^4}{1-x^2-x^4} \notag \\
        & ~=~ \sum_{t=2}^{\infty} \, F_{t-1}x^{2t} \notag  \\
        & ~=~ x^4 + x^6 + 2x^8 + 3x^{10} + 5x^{12} + 8x^{14} + \cdots, 
\end{align}
where $F_n$ is the $n$-th Fibonacci number.

Both $V_1$ and $V_3$ are also pivot nodes for reverse multiples with 
an odd number of digits,
and so $\sQ (x) = x \sP (x)$, and
\begin{align}\label{GF109}
\sC (x) & ~=~ \sP(x) + \sQ(x) \notag \\
        & ~=~ \frac{x^4(1+x)}{1-x^2-x^4} \notag \\
        & ~=~ \sum_{t=4}^{\infty} \, F_{\lfloor \frac{t}{2} \rfloor -1}x^{t} \notag \\
        & ~=~ x^4 + x^5 + x^6 + x^7 + 2x^8 + 2x^9 + 3x^{10} + 3x^{11} + 5x^{12} + 5x^{13} + \cdots. 
\end{align}
The coefficients form entry A103609 in \cite{OEIS}.

Isomorphic Young graphs have the same generating function, so the
generating function for the number of $(10,4)$-reverse multiples
is also given by \eqn{GF109}.

If we are only interested in the number of base-10 reverse multiples
with $t$ digits, regardless of the multiplier,
the generating function is
\beql{Eq10}
\frac{2x^4(1+x)}{1-x^2-x^4} 
~=~ 2x^4 + 2x^5 + 2x^6 + 2x^7 + 4x^8 + 4x^9 + 6x^{10} + 6x^{11} + 10x^{12} + 10x^{13} + \cdots,
\eeq
with coefficients $2F_{\lfloor \frac{t}{2} \rfloor -1}$ 
that are twice Fibonacci numbers. The first 2 refers to the two 
numbers in the Hardy story mentioned in
the opening paragraph. 
Would this generating function have changed
Hardy's opinion of the problem?

\subsection{The ``1089'' graph.}\label{Sec34}
We call any Young graph that is isomorphic to the
$(10,4)$ and $(10,9)$ graphs of Figs. \ref{Fig10_4}
and \ref{Fig10_9}  a {\em 1089 graph}.
As can be seen from the entries labeled $a$ in Table \ref{gkTable} of \S\ref{Sec4},
there are many occurrences of this graph.
If true, the following conjecture would explain all these cases.

\begin{conj}\label{Conj1089}
A necessary and sufficient condition for
the $(g,k)$ Young graph to be isomorphic to the 1089
graph is that $k+1$ divides $g$.
\end{conj}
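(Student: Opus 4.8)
The plan is to prove both implications, with the four-node graph of Figs.~\ref{Fig10_4}--\ref{Fig10_9} as the target; in the $(g,k)$ labelling its non-starting nodes are $[0,0]$, $[0,k-1]$, $[k-1,k-1]$, $[k-1,0]$, with loops at the two pivots $[0,0]$ and $[k-1,k-1]$ and the four-cycle $[0,0]\to[0,k-1]\to[k-1,k-1]\to[k-1,0]\to[0,0]$. For sufficiency, suppose $g=m(k+1)$. I would first exhibit these edges directly from the label formula \eqn{Eq93.5}: the edge $[0,0]\to[0,k-1]$ carries label $\bigl(m,\tfrac{kg}{k+1}\bigr)$, the edge $[0,k-1]\to[k-1,k-1]$ carries $\bigl(m-1,\tfrac{kg}{k+1}-1\bigr)$, and the loop at $[k-1,k-1]$ carries $(g-1,g-1)$, the remaining edges being forced by (P3). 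Each entry is a nonnegative integer \emph{precisely} because $k+1\mid g$, and a check of \eqn{EqY} confirms every edge; since $k-1\ne0$, the node $[k-1,k-1]$ is a nonzero pivot, so the Young graph exists by Theorem~\ref{Th1}. I would also record that the short-circuit edge $[0,0]\to[k-1,k-1]$ is \emph{absent}, since its putative label $\tfrac{kg-1}{k+1}$ is non-integral ($kg-1\equiv-1\pmod{k+1}$); this is what forces every path to the nonzero pivot to run through $[0,k-1]$.

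The heart of sufficiency is the pruning step: one must show no node of $H(g,k)$ outside these four survives. The clean formulation of what is needed is a carry-restriction lemma: \emph{when $k+1\mid g$, every carry digit $r_i$ occurring in an actual $(g,k)$-reverse multiple lies in $\{0,k-1\}$}. Granting it, every surviving node $[r_{n-1-i},r_{i-1}]$ has both coordinates in $\{0,k-1\}$ and so is one of the four, while the admissible transitions between carry values close up into exactly the stated four-cycle. I would attack the lemma through \eqn{Eq93.5} read modulo $k+1$ and modulo $k-1$: the reduction modulo $k+1$ (where $g\equiv0$ and $k\equiv-1$) already yields the symmetry $r_{i-1}=r_{n-2-i}$, and combining the resulting integrality constraints with the range $0\le r_i<k$ should pin the carries to the two extreme values. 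Nodes of $H(g,k)$ that are reachable but never occur as such a carry-pair (for instance $[0,1]$ when $(g,k)=(24,3)$) are exactly the dead ends, and are pruned away.

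For necessity, suppose the $(g,k)$ Young graph is abstractly the $1089$ graph. By (P4) it contains $[0,0]$, which always carries a loop, so $[0,0]$ is one of the two pivots; using the order-two symmetry of the abstract graph I may take the other pivot to be $[r,r]$ with $r\ne0$, reached from $[0,0]$ through a single intermediate node $[p,q]$, with $[q,p]$ forced by (P3). The carry inequality in \eqn{EqY} out of $[0,0]$ forces the first coordinate of the intermediate node to be $a_i-ka_{n-1-i}$, and the structural demands---that $[0,0]$ have no further out-edge, that $[p,q]$ have neither a loop nor a second out-edge, and that the cycle close after four nodes---should force $(p,q,r)=(0,k-1,k-1)$. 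The surviving edge $[0,0]\to[0,k-1]$ then has label entry $\tfrac{kg}{k+1}$, whose integrality is exactly $k+1\mid g$.

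The single hardest ingredient, which I would establish first, is the carry-restriction lemma; it is also the crux on the necessity side, where the difficulty is not to \emph{exhibit} $(0,k-1,k-1)$ as a valid labelling but to prove that the four-node abstract structure \emph{admits no other}. Everything else is bookkeeping with \eqn{Eq93.5} and (P3). That this uniform control of the carries for every $m=g/(k+1)$ is so delicate is, I suspect, exactly why the statement is recorded here as a conjecture.
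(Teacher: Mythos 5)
First, you should know that the paper contains no proof of this statement: it is recorded precisely as Conjecture~\ref{Conj1089}, supported only by machine verification for $g\le 40$, by the special case $k=g-1$ (Theorem~\ref{Thgm1}), and by Theorem~\ref{Th1089} --- which \emph{assumes} that the $(g,k)$ Young graph is the 1089 graph rather than deriving it. So there is no ``paper proof'' to match yours against; the only question is whether your sketch settles the open problem, and it does not. Your identification of the target labelling is correct (it agrees with Theorem~\ref{Th1089}(i), with $b=m=g/(k+1)$), and your observation that the chord $[0,0]\to[k-1,k-1]$ is absent because $(kg-1)/(k+1)$ is non-integral is also right. But these only reproduce what the graph must look like \emph{if} the conjecture holds; the substance is your carry-restriction lemma, and the local congruence argument you propose for it fails.

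Here is the failure concretely. Reading \eqn{Eq93.5} modulo $k+1$ (with $k+1\mid g$) does give $r_{i-1}=r_{n-2-i}$, so a path visits consecutive pairs $[r_{i-2},r_{i-1}]$ of one carry sequence, and the digits become $a_i=(mkr_i+mr_{i-2}-r_{i-1})/(k-1)$, $a_{n-1-i}=(mkr_{i-2}+mr_i-r_{i-1})/(k-1)$. The residual condition is $m(r_i+r_{i-2})\equiv r_{i-1}\pmod{k-1}$, and together with the ranges $0\le r_j<k$, $0\le a<g$ it does \emph{not} pin the carries to $\{0,k-1\}$. For $(g,k)=(12,3)$ the triple $(r_{i-2},r_{i-1},r_i)=(0,1,1)$ passes every local test: it yields the perfectly legal transition $[0,1]\xrightarrow{(1,4)}[1,1]$ of \eqn{EqY} (check: $3\cdot 4+1=1+12$ and $3\cdot 1+1=4+0$), and in fact a whole cycle $[0,1]\to[1,1]\to[1,0]\to[0,1]$ of locally valid transitions. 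Carry $1$ is excluded from actual $(12,3)$-reverse multiples only by a \emph{global} fact: these nodes are unreachable from the starting node, since the start forces $2\mid r_0$, hence $r_0=2$. By contrast, for $(24,3)$ the node $[0,1]$ \emph{is} reachable (edge from the start with label $(3,9)$) and is excluded only because it is a dead end, every candidate digit $(18r_i-1)/2$ being non-integral. So whether a bad carry is killed by reachability from the start or by co-reachability to a pivot depends on the arithmetic of $m=g/(k+1)$, and your sketch supplies no uniform mechanism covering both; the pruning step is the entire difficulty, not bookkeeping. The necessity direction has the same defect: ``should force $(p,q,r)=(0,k-1,k-1)$'' is asserted, not proved, and ruling out every other labelling of the abstract four-node graph is again a global constraint problem (the paper's own partial results in this direction, Theorem~\ref{Thgm1bis} and Conjecture~\ref{Conj1089a}, show the author saw this reduction but could not complete it). Your closing sentence concedes the point, and the concession is accurate: you have correctly located the crux of an open conjecture, not closed it.
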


The conjecture is certainly true for $g \le 40$. Checking the conjecture 
for larger values of $g$ is laborious, because many of the larger graphs 
$H(g,k)$ require extensive pruning before the Young graph can be identified.

The next theorem characterizes the reverse multiples 
arising from the 1089 graph.
\begin{theorem}\label{Th1089}
If $k+1$ divides $g$, and the $(g,k)$ Young graph is the 1089 graph, 
let $b = g/(k+1)$. 
(i) The labels on the Young graph are obtained 
by replacing the numbers 1, 2, 3, 7, 8 and 9 on
the $(10,4)$ Young graph shown in Fig. \ref{Fig10_4}
by $b-1$, $b$, $k-1$, $kb-1$, $kb$ and $g-1$, respectively.
The two pivot nodes have labels $[0,0]$ and $[k-1,k-1]$. 
(ii) The generating function $\sC(x)$ is given by \eqn{GF109}.
(iii) The $(g,k)$-reverse multiples are all the numbers
of the form $\gamma \beta$,
where $\gamma = b(g^2-1) = (b-1,g-1,kb)_g$ and 
$\beta$ is any positive number whose base-$g$ expansion
is palindromic, contains only the digits $0$ and $1$,
and does not contain any single $0$'s or $1$'s 
(i.e., any run of consecutive equal digits must have length at least $2$).
(iv) The shortest reverse multiple has length 4, and there are
$F_{\lfloor \frac{t}{2} \rfloor -1}$
reverse multiples of length $t \ge 4$.
\end{theorem}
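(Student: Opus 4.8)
For part (i), I would verify the asserted labels directly rather than re-derive the abstract shape of the graph, which is given (the Young graph is assumed to be the 1089 graph). Writing $g=(k+1)b$, take the four internal nodes $[0,0]$, $[0,k-1]$, $[k-1,k-1]$, $[k-1,0]$ together with the $4$-cycle $[0,0]\to[0,k-1]\to[k-1,k-1]\to[k-1,0]\to[0,0]$ and loops at the two diagonal nodes, and assign the edge labels $(b,kb)$, $(b-1,kb-1)$, $(kb-1,b-1)$, $(kb,b)$ around the cycle and $(0,0)$, $(g-1,g-1)$ on the two loops. Each is checked to be a genuine solution of the defining relations \eqn{EqY}--\eqn{Eq8}: for instance the loop at $[k-1,k-1]$ gives, via \eqn{Eq93.5}, the digit $(k-1)[(k+1)g-(k+1)]/(k^2-1)=g-1$, and the cycle edge $[0,k-1]\to[k-1,k-1]$ satisfies $k(kb-1)+(k-1)\equiv b-1\pmod g$ because $b(k^2-1)=(k-1)g$, while its second inequality in \eqn{EqY} evaluates to $k-1$. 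A short search over digit pairs in \eqn{EqY} at each of the four nodes confirms there are no further outgoing edges, so this labelled graph is exactly $H(g,k)$ on these nodes; since the Young graph is given to be the 1089 graph, its labels must be these. The two loops make $[0,0]$ and $[k-1,k-1]$ simultaneously even and odd pivot nodes.

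Part (ii) is then immediate. The transfer-matrix computation of \S\ref{Sec32} depends only on the underlying directed graph together with the list of even/odd pivot nodes, not on the labels. By part (i) the 1089 graph has the same underlying digraph and the same pivot data as the $(10,9)$ graph of Fig.~\ref{Fig10_9}, so its adjacency matrix $A$, and hence $B=A(I-A)^{-1}$ and the generating function, coincide with those already computed there; thus $\sC(x)$ is given by \eqn{GF109}, as anticipated by the remark in \S\ref{Sec32} that isomorphic Young graphs share their generating function.

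The substance is part (iii), which I would prove first in the forward direction by an explicit digit computation. A maximal run of $m\ge 2$ ones in $\beta$, occupying positions $p,\dots,p+m-1$, contributes $\gamma\cdot g^p(g^m-1)/(g-1)=b(g+1)g^p(g^m-1)$; writing $(g^m-1)$ as $m$ digits equal to $g-1$, multiplying by $(g+1)$ and then by $b$ while tracking carries shows this is the block of digits $(b,\,b-1,\,\underbrace{g-1,\dots,g-1}_{m-2},\,kb-1,\,kb)$ placed at positions $p,\dots,p+m+1$, which is itself a $(g,k)$-reverse multiple of length $m+2$ (for $m=2$ it is the $1089$ number $\gamma(g+1)$). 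The hypothesis that every $0$-run of $\beta$ has length $\ge 2$ guarantees that consecutive blocks occupy disjoint ranges of positions: a singleton $0$ would place the top digit $b$ of one block and the bottom digit $kb=g-b$ of the next at the same position, forcing a carry since $b+kb=g$. Hence $N=\gamma\beta$ is literally the concatenation of these blocks separated by the appropriate runs of $0$'s, with no carries across boundaries. Because $\beta$ is palindromic its block-and-gap pattern is symmetric, and since $k\cdot(\text{block})=\Reverse(\text{block})$ has the same length, reversing the whole digit string of $N$ carries each block to the symmetric (equal-length) block, reversed, which is exactly its $k$-fold; therefore $\Reverse(N)=kN$ and $N$ is a reverse multiple.

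For the converse, and to finish part (iii), the map $\beta\mapsto\gamma\beta$ is visibly injective, and by the block calculation $\gamma\beta$ has length $t$ precisely when $\beta$ has length $t-2$; a direct count shows the admissible palindromes $\beta$ of each length are enumerated by $F_{\lfloor t/2\rfloor-1}$, matching the coefficient of $x^t$ in \eqn{GF109} from part (ii), so the injection is onto and every $(g,k)$-reverse multiple is $\gamma\beta$ for a unique admissible $\beta$. (Alternatively one reads the block-and-gap structure straight off the path of $N$ in the Young graph, loops at $[k-1,k-1]$ lengthening a $1$-run and loops at $[0,0]$ lengthening a $0$-run.) Part (iv) is then read off \eqn{GF109}: its lowest-degree term is $x^4$, realized by $\beta=(1,1)_g$, so the shortest reverse multiple has length $4$, and the coefficient of $x^t$ is $F_{\lfloor t/2\rfloor-1}$. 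I expect the main obstacle to be the carry bookkeeping in the forward direction: verifying that $b(g+1)(g^m-1)$ produces exactly the stated block of digits, and—most importantly—that the ``runs of length $\ge 2$'' condition is precisely what prevents carries from propagating between adjacent blocks. Once this non-overlap is in hand, the palindrome-symmetry argument and the deductions of (ii) and (iv) are routine.
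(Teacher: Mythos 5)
Your proposal is correct, and for the heart of the theorem---part (iii)---it takes a genuinely different route from the paper. Parts (i), (ii) and (iv) are essentially the paper's argument: the paper establishes (i) more economically, by a single multiplication tableau showing $N_0=(b,b-1,kb-1,kb)_g$ is a $(g,k)$-reverse multiple and reading the node labels off the carries via Theorem \ref{Th1} (the edge labels then being forced by \eqn{Eq93.5}), whereas you verify the full labelling against \eqn{EqY} directly; both then get (ii) from label-independence of the transfer-matrix computation and (iv) from the series \eqn{GF109}. For (iii), however, the paper stays inside the graph: it observes that every path from the starting node to a pivot spells out a unique word over the alphabet $\{0,W_4,W_5,\ldots\}$, where $W_n=(b,b-1,g-1,\ldots,g-1,kb-1,kb)_g$, so that \emph{completeness} (every reverse multiple has this form) is automatic from the path correspondence, and only then factors $W_n=\gamma\times(1,\ldots,1)_g$ to exhibit the palindrome $\beta$. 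You instead prove the inclusion $\{\gamma\beta\}\subseteq\{\text{reverse multiples}\}$ by direct carry arithmetic---identifying each maximal $1$-run of $\beta$ with a block $b(g+1)g^p(g^m-1)=W_{m+2}\cdot g^p$, showing the ``no singleton runs'' condition is exactly what keeps blocks disjoint since $b+kb=g$, and using palindromic symmetry of the run pattern to get $\Reverse(N)=kN$---and then obtain completeness by matching cardinalities: the map $\beta\mapsto\gamma\beta$ is injective and the count of admissible palindromes of each length agrees with the coefficients of \eqn{GF109}. That counting device is not in the paper's proof of this theorem, but it is precisely the technique the paper itself uses to finish Theorem \ref{ThC2}(iii), and the required Fibonacci count of admissible $\beta$'s is supplied by the paper's $u_n=u_{n-1}+u_{n-2}$ argument immediately after the proof; so your completion is sound, provided you actually carry out that count rather than only assert it. The trade-off: the paper's route is shorter and gets both inclusions at once from the graph, while yours makes the forward inclusion a self-contained arithmetic fact about $\gamma$ and binary palindromes, at the cost of leaning on the generating function of (ii) for the converse.
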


\begin{proof}
(i) Let $N_0 = (b,b-1,kb-1,kb)_g$.
The following calculation shows that $N_0$ is a $(g,k)$-reverse multiple,
and via Theorem \ref{Th1} gives the labels on the Young graph:
\begin{center}
$$
\begin{tabular}{ l c p{.1in} p{.2in}  p{.4in}  p{.4in}  p{.3in} }
$N_0$ & =       &   & $b$ & $b-1$ & $kb-1$ & $kb$ \\
    &         &   &   &   &  & $\times k$ \\
\hline
$kN_0$ & =      &   & $kb$ & $kb-1$ & $\,b-1$ & $b$ \\
\hline
carries  & =  & 0 & 0 & $k-1$ & $k-1$ &0 \\
\end{tabular}
$$
\end{center}
(ii) The generating function (which does not depend on the labels)
was calculated in \S\ref{Sec32}.
(iii) Let $W_n$ denote the number
$$
(b,b-1,g-1,g-1,\ldots,g-1,g-1,kb-1,kb)_g,
$$
with $n-4$ copies of $g-1$ in the center of the number
(so $W_4 = N_0$).
Inspection of the possible paths in the
1089 graph from the starting node to either of the pivot nodes
shows that every $(g,k)$-reverse multiple
has a unique representation
as a nonempty `word' over the `alphabet' 
$\{0, W_4, W_5, W_6, \ldots \}$,
using the terminology of formal language theory, cf. \cite{Loth}.
For the path to the first pivot gives $W_4$. 
Going around the loop at this pivot several times
changes $W_4$ to $W_5$, $W_6, \ldots .$
Proceeding to the next pivot introduces another $W_4$,
going around the loop at the second pivot 
introduces 0's into the number, and so on.
The reverse multiples of lengths 4 through 10 are
$W_4$, \ldots, $W_8$,
$W_9$, $W_4 0 W_4$, $W_{10}, W_5^2, W_4 0^2 W_4$.

It is easy to see that $W_n = \gamma \times (1,1,\ldots,1)_{g}$, with $n-2$ 1's,
where $\gamma = (b-1,g-1,kb)_g$,
$W_n 0 W_n = \gamma \times (1,1,\ldots, 1,0,0,1,\ldots,1)_g$,
and so on.
We conclude that
any $(g,k)$ reverse multiple is equal to $\gamma \times \beta$,
where $\beta$ is a positive number whose base-$g$ expansion contains only
0's and 1's, is palindromic, and consists of
strings of 1's of length at least 2, separated by strings of
0's of length at least 2.
(iv) This follows at once from the Taylor series expansion of the generating function.
\end{proof}

We can now give a direct explanation for why the Fibonacci numbers
arise in this problem.
It is enough to consider the reverse multiples of even length,
with generating function \eqn{GF109e}.
Let $u_{n}$ denote the number of length-($2n$) choices for
$\beta$ . It is easy to see
that $u_n=u_{n-1}+u_{n-2}$ for $n \ge 3$.
(Look at
the left-hand halves $u_1 \ldots u_n$ of the strings $\beta$.
The strings of length $n$ are obtained by mapping
strings $u_1 \ldots u_{n-2}$ of length $n-2$
to $u_1 u_2 \ldots u_{n-2} u_{n-2} (1-u_{n-2})$, 
and strings $u_1 \ldots u_{n-1}$ of length $n-1$
to $u_1 u_2 \ldots u_{n-1} u_{n-1}$.)
Since this is also the recurrence for the Fibonacci numbers, 
we have $u_n = F_n$.

As an illustration, consider
the case $g=10$. The divisors of 10 are 1, 2, 5 and 10,
and so the conjecture asserts, correctly, that for
$k=4$ and $k=9$ the Young graph is the graph under discussion.
When $k=4$, we have $b=10/5 = 2$, $\gamma = 2(10^2-1) = 198$,
and the theorem asserts, again correctly, that the $(10,4)$-reverse
multiples, given in \eqn{A008918}, are 198 times
one of the base-10 numbers
\beql{A061851}
11, 111, 1111, 11111, 110011, 111111, 1100011, 1111111, 11000011, 11100111, \ldots
\eeq
that are palindromic and contain no singleton 0's or 1's (see A061851). 
Similarly, when $k=9$, $b=1$, $\gamma = 99$,
and the $(10,9)$-reverse multiples, given in \eqn{A001232},
are 99 times the numbers in \eqn{A061851}.

Conjecture \ref{Conj1089} is certainly true when $k=g-1$:

\begin{theorem}\label{Thgm1}
$(g,g-1)$-reverse multiples exist for every $g \ge 3$,
and the $(g,g-1)$ Young graph is the 1089 graph.
\end{theorem}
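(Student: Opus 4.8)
The plan is to compute the graph $H(g,g-1)$ directly and to show that it requires no pruning and coincides with the 1089 graph. Since $k=g-1$ forces $k+1=g$ and hence $b=g/(k+1)=1$, this is precisely the extreme case $b=1$ of Conjecture \ref{Conj1089}; once the graph has been identified as the 1089 graph, parts (i)--(iv) of Theorem \ref{Th1089} apply verbatim, so the entire argument reduces to pinning down the nodes and edges of $H(g,g-1)$. I would not attempt to list reverse multiples: existence will drop out for free once a nonzero pivot node is exhibited.

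The first step is to exploit the congruence $k\equiv-1\pmod g$ to simplify \eqn{EqY}. Writing a generic node as $[r',r]$ with $0\le r,r'\le g-2$, the first line of \eqn{EqY} becomes $a_i+a_{n-1-i}\equiv r\pmod g$, while the inequality, after substituting $(g-1)a_{n-1-i}=g\,a_{n-1-i}-a_{n-1-i}$, reads $0\le (a_i+a_{n-1-i})+g(r'-a_{n-1-i})<g-1$. The left-hand side is congruent to $r$ modulo $g$, and since $0\le r\le g-2<g-1$ the unique in-range value it can take is $r$ itself; thus $(a_i+a_{n-1-i})+g(r'-a_{n-1-i})=r$ exactly. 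Because $a_i+a_{n-1-i}\in[0,2g-2]$ and is $\equiv r\pmod g$, it equals either $r$ or $r+g$, and each alternative pins down $a_{n-1-i}$: the value $r$ forces $a_{n-1-i}=r'$ (valid when $r\ge r'$), and the value $r+g$ forces $a_{n-1-i}=r'+1$ (valid when $r\le r'$). Running these back through \eqn{Eq8} then shows that every node $[r',r]$ has exactly one outgoing edge of each admissible type: when $r\ge r'$ an edge $[r',r]\xrightarrow{(r',\,r-r')}[r,\,r-r']$, and when $r\le r'$ an edge $[r',r]\xrightarrow{(r'+1,\,r+g-1-r')}[r,\,r+g-2-r']$. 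This two-case dichotomy is the technical heart of the proof, and the step most likely to need care: one must check that the only in-range solution of the inequality is $r$, and that the carries produced in \eqn{Eq8} indeed stay strictly below $k=g-1$.

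With this edge rule in hand I would trace the component reachable from the starting node. From $[[0,0]]$ the label $(0,0)$ is forbidden, so only the $r\le r'$ branch survives, giving $[[0,0]]\to[0,g-2]$. From $[0,g-2]$ only the $r\ge r'$ branch is available (as $g-2>0$), giving $[0,g-2]\to[g-2,g-2]$; the node $[g-2,g-2]$ carries both a loop (its $r\le r'$ branch) and an edge to $[g-2,0]$; then $[g-2,0]\to[0,0]$; and the internal node $[0,0]$ has a loop together with an edge back to $[0,g-2]$. Hence exactly four internal nodes $[0,0]$, $[0,g-2]$, $[g-2,g-2]$, $[g-2,0]$ and seven edges occur, this node set is closed under the edge rule, every node reaches the pivot $[g-2,g-2]$, and no dead ends arise; so no pruning is needed and $H(g,g-1)$ is already the Young graph.

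To finish, I would match this against Figs. \ref{Fig10_4} and \ref{Fig10_9}: the abstract directed graph and the positions of the two even pivot nodes $[0,0]$ and $[g-2,g-2]$ are identical to those of the $(10,4)$ and $(10,9)$ graphs under the substitution $3\mapsto g-2$, so by the notion of isomorphism in \S\ref{SecRem}(i) the $(g,g-1)$ Young graph is the 1089 graph. Existence of $(g,g-1)$-reverse multiples then follows from Theorem \ref{Th1}(i), since $[g-2,g-2]$ is a nonzero pivot node for every $g\ge3$ (indeed $g-2\ge1$). As a consistency check, the six distinct internal labels produced by the dichotomy, namely $(0,0)$, $(1,g-1)$, $(0,g-2)$, $(g-1,g-1)$, $(g-2,0)$, $(g-1,1)$, agree with the labels prescribed by Theorem \ref{Th1089}(i) specialized to $b=1$.
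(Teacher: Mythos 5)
Your proof is correct, and it is actually more complete than the paper's own argument, which is explicitly a sketch. The paper works from the boundary inward: it forces the extreme digits $a_{n-1}=1$, $a_0=g-1$ (hence $r_0=g-2$, $r_{n-2}=0$), repeats the forcing argument at $i=1$ to get $a_{n-2}=0$, $a_1=g-1$, $r_1=r_{n-3}=g-2$, and then says ``continuing in this way \ldots we omit the details.'' You instead solve \eqn{EqY} once and for all at an arbitrary node $[r',r]$, using $k\equiv -1\pmod g$ to collapse the congruence and the inequality into the single exact equation $a_i+a_{n-1-i}+g(r'-a_{n-1-i})=r$, whose two admissible solutions give a uniform out-edge rule: for $r\ge r'$ an edge to $[r,r-r']$ with label $(r',r-r')$, and for $r\le r'$ an edge to $[r,r+g-2-r']$ with label $(r'+1,r+g-1-r')$. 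Both computations amount to finding all solutions of Young's equations for $k=g-1$, but your decomposition---a generic local dichotomy followed by closure of the component reachable from $[[0,0]]$---replaces the paper's open-ended induction with a computation that terminates after four internal nodes, and it produces the explicit edge labels, which you correctly match against Theorem \ref{Th1089}(i) with $b=1$. I checked the dichotomy and the carries it produces (case $r\ge r'$ yields carries $r-r'$ and $r$; case $r\le r'$ yields $r+g-2-r'$ and $r$; all lie in $[0,g-2]$ as required by \eqn{Eq5}), and the traced component, with both even pivots $[0,0]$ and $[g-2,g-2]$ carrying loops, agrees node-for-node and edge-for-edge with the adjacency matrix of the $(10,9)$ graph displayed in \S\ref{Sec32}, so the isomorphism claim and the existence conclusion via Theorem \ref{Th1}(i) both stand.
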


\begin{proof}
(Sketch.) It is easy to find all solutions to \eqn{EqY}
when $k=g-1$.
We must start with $a_{n-1}=1$, $a_0=g-1$ (or else
$(k-1)a_{n-1}$ would exceed $g$).
This implies $r_0=g-2$, $r_{n-2}=0$.
Next, with $i=1$, the equations \eqn{EqY} become
$a_1+a_{n-2}+2 \equiv 0 \pmod{g}$,
$0 \le a_1 - (g-1)a_{n-2} < g-1$.
If $a_{n-2} \ge 1$ then $a_1 \ge g-1$, implying $a_1 = g-1$,
and the equivalence has no solution. Therefore $a_{n-2}=0$,
which implies $a_1=g-1$ and then $r_1=r_{n-3}=g-2$.
Continuing in this way we find that the graph in
Fig. \ref{Fig10_9} contains all possible solutions.
We omit the details.
\end{proof}

The next result is related to Conjecture \ref{Conj1089}.

\begin{theorem}\label{Thgm1bis}
If $N$ (given by \eqn{Eq1}) is a $(g,k)$-reverse multiple
then $a_0 + a_{n-1} \le g$.
If $a_0 + a_{n-1} = g$ then $k+1$ divides $g$.
\end{theorem}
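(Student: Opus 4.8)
The plan is to extract the two boundary equations from the carry system \eqn{Eq4}, solve them for the extreme digits $a_0$ and $a_{n-1}$, and then read the inequality off directly. The first and last equations of \eqn{Eq4} are $ka_0 = a_{n-1} + r_0 g$ and $a_0 = ka_{n-1} + r_{n-2}$, which is precisely the $i=0$ case of the paired system \eqn{Eq6} under the convention $r_{-1} = r_{n-1} = 0$. Solving this pair---equivalently, specializing the closed form \eqn{Eq93.5} of property (P1) to $i=0$---yields
\[
a_0 = \frac{k r_0 g - r_{n-2}}{k^2-1}, \qquad a_{n-1} = \frac{r_0 g - k r_{n-2}}{k^2-1}.
\]

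Next I would add these two expressions; the factor $k+1$ cancels against $k^2-1$, leaving the clean identity
\[
a_0 + a_{n-1} = \frac{r_0 g - r_{n-2}}{k-1}.
\]
The inequality is now immediate from the carry bounds. By \eqn{Eq5}, each carry with index between $0$ and $n-2$ satisfies $0 \le r_i \le k-1$; in particular $r_0 \le k-1$ and $r_{n-2} \ge 0$, so the numerator is at most $(k-1)g$ and division by $k-1 > 0$ gives $a_0 + a_{n-1} \le g$.

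For the equality clause I would examine when this bound is tight. Equality $a_0 + a_{n-1} = g$ forces $r_0 g - r_{n-2} = (k-1)g$, and since $r_0 \le k-1$ and $r_{n-2} \ge 0$ this can hold only if \emph{both} $r_0 = k-1$ and $r_{n-2} = 0$. Feeding these values back into the formula for $a_{n-1}$ collapses it to $a_{n-1} = g/(k+1)$; as $a_{n-1}$ is an integer, $k+1$ must divide $g$, which is the second assertion.

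I do not anticipate a genuine obstacle: once the two boundary equations are solved, the argument is a single cancellation followed by the elementary carry bounds from \eqn{Eq5}. The one place deserving care is the equality analysis, where one must observe that the conditions $r_0 = k-1$ and $r_{n-2} = 0$ are forced \emph{simultaneously} before the integrality argument applies. As a sanity check, for $n=2$ one has $r_{n-2} = r_0 > 0$, so equality cannot occur for two-digit numbers---consistent with the explicit two-digit formulas recorded in (P7).
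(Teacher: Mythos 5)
Your proof is correct and takes essentially the same route as the paper's: both use the $0$-th pair of equations from \eqn{Eq6}, arrive at the identity $a_0+a_{n-1}=(r_0 g-r_{n-2})/(k-1)$, bound it via $r_0\le k-1$ and $r_{n-2}\ge 0$, and in the equality case force $r_0=k-1$, $r_{n-2}=0$, so that $a_{n-1}=g/(k+1)$ and integrality gives $k+1\mid g$. The only cosmetic difference is that you first solve for $a_0$ and $a_{n-1}$ separately (via \eqn{Eq93.5} at $i=0$) and then add, whereas the paper adds the two equations of \eqn{Eq77} directly.
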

\begin{proof}
The 0-th pair in \eqn{Eq6} states that
\beql{Eq77}
k a_0 = a_{n-1}+r_0 g, \quad ka_{n-1}+r_{n-2} = a_0\,.
\eeq
Hence $(k-1)(a_0+a_{n-1}) = r_0g-r_{n-2}$,
and so, since $r_0 \le k-1$,
$$
a_0 + a_{n-1} ~=~ \frac{r_0 g -r_{n-2}}{k-1} ~\le~g\,.
$$
If equality holds then $r_0=k-1$,
$r_{n-2}=0$, and \eqn{Eq77} gives $(k^2-1)a_{n-1} = g(k-1)$,
and so $k+1$ divides $g$.
\end{proof}

If true, Conjecture \ref{Conj1089} would imply that the converse holds: if
$k+1$ divides $g$, then any $(g,k)$-reverse multiple
satisfies $a_0 + a_{n-1} = g$ (since then, by Theorem \ref{Th1089}, $a_0=kb$
and $a_{n-1}=b$).

The next conjecture would follow from
Conjecture \ref{Conj1089} and Theorem \ref{Thgm1bis}.
We state it as a separate conjecture since it will be used in 
\S\ref{Sec36}.
It may also have a proof independent of Conjecture \ref{Conj1089}.

\begin{conj}\label{Conj1089a}
If there is a (g,k)-reverse multiple with $a_0 + a_{n-1} = g$
then the Young graph is the 1089 graph.
If the graph is the 1089 graph then every reverse multiple satisfies
$a_0 + a_{n-1} = g$.
\end{conj}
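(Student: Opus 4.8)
\noindent\emph{Proof proposal.} The statement packages two implications, and it helps to separate the routine part from the hard part. As the paragraph preceding the conjecture observes, everything follows at once from Conjecture~\ref{Conj1089}: if some reverse multiple has $a_0+a_{n-1}=g$ then Theorem~\ref{Thgm1bis} gives $k+1\mid g$, and the sufficiency half of Conjecture~\ref{Conj1089} makes the graph the $1089$ graph; conversely, if the graph is the $1089$ graph then the necessity half gives $k+1\mid g$, whereupon the description in part~(iii) of Theorem~\ref{Th1089} (via the blocks $W_n$) gives every reverse multiple leading digit $a_{n-1}=b$ and trailing digit $a_0=kb$, so that $a_0+a_{n-1}=(k+1)b=g$. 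Thus the content is exactly that of Conjecture~\ref{Conj1089}, and an independent proof must confront the same classification. I would first record that the two hypotheses in the statement are genuinely equivalent: Theorem~\ref{Thgm1bis} turns ``$a_0+a_{n-1}=g$'' into $k+1\mid g$, while conversely, when $k+1\mid g$, the number $N_0=(b,b-1,kb-1,kb)_g$ from the proof of Theorem~\ref{Th1089} is a reverse multiple with $a_0+a_{n-1}=kb+b=g$, and that verification uses only $b=g/(k+1)\in\Z$, not any graph-theoretic input. Hence Conjecture~\ref{Conj1089a} is logically equivalent to Conjecture~\ref{Conj1089} together with the already-proved part~(iii) of Theorem~\ref{Th1089}, and I would aim to prove Conjecture~\ref{Conj1089} directly.

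The engine for this is a structural lemma valid whenever $k+1\mid g$. Write $g=(k+1)b$. For any edge $[r',r]\to[s,r_i]$ of $H(g,k)$, formula~\eqn{Eq93.5} gives $a_i=\bigl((kr_i+r')g-(kr+s)\bigr)/(k^2-1)$; since $k+1$ divides both $g$ and $k^2-1$, integrality of $a_i$ forces $(k+1)\mid(kr+s)$, and as $kr+s\equiv s-r\pmod{k+1}$ with $0\le r,s\le k-1$ this means $s=r$. In words: when $k+1\mid g$, every edge carries the second coordinate of its source into the first coordinate of its target. Consequently $H(g,k)$ degenerates into a second-order walk on carry values: a path is a sequence $r_0=0,r_1,r_2,\dots$ whose nodes are the consecutive pairs $[r_{j-1},r_j]$, and from $[r',r]$ an edge to $[r,r_i]$ exists precisely when $b(r'+r_i)\equiv r\pmod{k-1}$ and the two resulting digits $(b(kr_i+r')-r)/(k-1)$ and $(b(kr'+r_i)-r)/(k-1)$ both lie in $[0,g)$.

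With this reduction one starts at $[[0,0]]$ and enumerates the reachable nodes. The intended $1089$ skeleton is always present: from $[0,0]$ one gets the loop $(0,0)$ and an edge to $[0,k-1]$; from $[0,k-1]$ the edge to $[k-1,k-1]$ survives while the competing target $[k-1,0]$ is killed, its digit being $(b\cdot 0-(k-1))/(k-1)=-1<0$; the node $[k-1,k-1]$ carries a loop (digit $g-1$) and an edge to $[k-1,0]$; and $[k-1,0]\to[0,0]$ closes the cycle, consistent with the reversal symmetry (P3). These four nodes and the two pivots $[0,0]$, $[k-1,k-1]$ are exactly the $1089$ graph, with labels matching Theorem~\ref{Th1089}(i). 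As a byproduct the necessity half of Conjecture~\ref{Conj1089} and the second implication of the present conjecture drop out together: the unique out-edge of the starting node fixes $a_0$ and $a_{n-1}$ for \emph{every} reverse multiple, so $a_0+a_{n-1}$ is the constant $(qg-p)/(k-1)$ determined by the first interior node $[p,q]$, and once that node is pinned to $[0,k-1]$ the constant equals $g$, which through the equality case of Theorem~\ref{Thgm1bis} also returns $k+1\mid g$.

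The hard part is the phrase ``the intended skeleton is always present'' used above: one must also show that \emph{nothing else} is present after pruning. When $\gcd(b,k-1)=d>1$ the congruence $b\,r_i\equiv 0\pmod{k-1}$ has solutions $r_i$ with $0<r_i<k-1$, and these do yield legitimate extra edges out of $[0,0]$ (and out of $[0,k-1]$), leading to spurious nodes such as $[0,m]$ with $0<m<k-1$. For the classification to hold, every such branch must be a dead end---following it can never reach either pivot---so that it is removed by pruning. This is a reachability question for a second-order congruential walk whose behaviour depends delicately on the arithmetic of $b$ and $k-1$, and controlling it uniformly over all $(g,k)$ with $k+1\mid g$ is precisely where the argument stalls. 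That is why Conjecture~\ref{Conj1089}, and with it the present conjecture, is so far only confirmed by computer, for $g\le 40$.
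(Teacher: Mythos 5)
The first thing to say is that the paper contains no proof of this statement: it is stated as a conjecture and remains open, the paper's entire commentary being that it ``would follow from Conjecture \ref{Conj1089} and Theorem \ref{Thgm1bis}'' (together with the parenthetical remark after Theorem \ref{Thgm1bis} that, assuming Conjecture \ref{Conj1089}, Theorem \ref{Th1089} pins the outer digits to $a_0=kb$, $a_{n-1}=b$). Your opening paragraph reproduces exactly this reduction, correctly, and in fact sharpens it: by observing that $N_0=(b,b-1,kb-1,kb)_g$ is a reverse multiple with $a_0+a_{n-1}=g$ whenever $k+1\mid g$ (a computation requiring no graph-theoretic input), and by invoking the equality case of Theorem \ref{Thgm1bis}, you show the statement is not merely implied by but \emph{equivalent} to Conjecture \ref{Conj1089} modulo the proved results. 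That equivalence is correct and is a genuine, if small, addition to what the paper says.

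Everything after that is an attack on Conjecture \ref{Conj1089} itself, and it does not close. Your structural lemma is correct and worth keeping: when $k+1\mid g$, reducing \eqn{Eq93.5} modulo $k+1$ shows $k+1$ divides $kr+s$, and since $kr+s\equiv s-r \pmod{k+1}$ with carries confined to $\{0,\dots,k-1\}$, every edge $[r',r]\to[s,t]$ of $H(g,k)$ must have $s=r$, collapsing the graph to a walk on consecutive carry pairs; your skeleton computations (including the digit $-1$ that kills the candidate edge $[0,k-1]\to[k-1,0]$) check out against Theorem \ref{Th1089}(i). But two gaps remain. The one you name yourself is fatal to the proposal as a proof: when $\gcd(g/(k+1),k-1)>1$ there are extra edges out of $[0,0]$, and you give no argument that all such branches die under pruning---that is precisely the unresolved content of the sufficiency half of Conjecture \ref{Conj1089}, which the paper itself has only verified by computer for $g\le 40$. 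Second, your claimed ``byproduct'' derivation of the necessity half is circular: the step ``once that node is pinned to $[0,k-1]$'' relies on the skeleton analysis, which is available only under the hypothesis $k+1\mid g$, the very conclusion necessity is supposed to deliver. Knowing only that the Young graph has the 1089 \emph{shape}, your formula $a_0+a_{n-1}=(qg-p)/(k-1)$ for the first interior node $[p,q]$ is correct, but nothing you write forces $[p,q]=[0,k-1]$ without already assuming $k+1\mid g$. So the proposal should be read as a correct reformulation plus a useful partial lemma; the statement itself remains exactly as open as the paper left it.
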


\subsection{The complete Young graphs $K_m$.}\label{Sec35}

Another frequently occurring Young graph is
the complete graph $K_m$.
This is the complete directed graph on $m$ nodes,
with a directed edge from every node to every node,
for a total of $m^2$ edges, together with
the starting node $[[0,0]]$ which is
connected to every node except $[0,0]$.
Every node (except the starting node) has a loop
and is both an even and odd pivot.
The $(5,2)$ Young graph $K_2$
and the $(11,3)$ Young graph $K_3$ are
shown in Figs. \ref{Fig5_2} and \ref{Fig11_3}.

\begin{figure}[htb]
\begin{minipage}[b]{0.45\linewidth}
\centering
\includegraphics[width=1.0\textwidth]{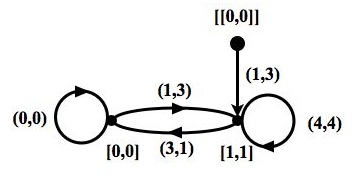}
\caption{The $(5,2)$ Young graph,
the complete graph $K_2$ on two nodes plus the starting node.}
\label{Fig5_2}
\end{minipage}
\hspace{0.5cm}
\begin{minipage}[b]{0.45\linewidth}
\centering
\includegraphics[width=1.0\textwidth]{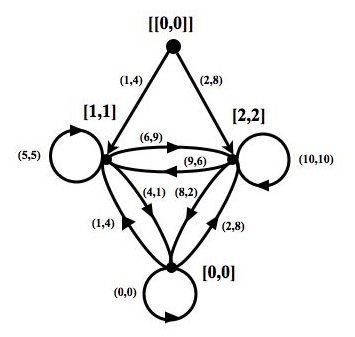}
\caption{The $(11,3)$ Young graph,
the complete graph $K_3$ on three nodes plus the starting node.}
\label{Fig11_3}
\end{minipage}
\end{figure}

As a second illustration of the transfer-matrix method,
consider the $(5,2)$ graph $K_2$.
The adjacency matrix $A$ (taking the nodes in the order $[[0,0]]$,
$[0,0]$, $[1,1]$) is
\beql{EqG2a}
\left[\begin{array}{ccc}
0 & 0 & x^2 \\
0 & x^2 & x^2 \\
0 & x^2 & x^2 
\end{array}
\right]\,.
\eeq
Then
\begin{align}\label{GF52e}
\sP (x) &= B\left|_{0,1} \right. + B\left|_{0,2} \right. \notag \\
        &= \frac{x^4}{1-2x^2} + \frac{x^2(1-x^2)}{1-2x^2} \notag \\
        &= \frac{x^2}{1-2x^2} \notag \\
        &= x^2 + 2x^4 + 4x^6 + 8x^8 + 16x^{10} + \cdots, 
\end{align}
and
\begin{align}\label{GF52}
\sC (x) &= (1+x) \sP(x) ~=~ \frac{x^2(1+x)}{1-2x^2} \notag \\
        &= x^2 + x^3 + 2x^4 + 2x^5 + 4x^6 + 4x^7 + 8x^8 + 8x^{9} + \cdots. 
\end{align}
In this case it is easy to explain the coefficients:
at each step in a path there are two choices for the next node.

The following result gives 
a partial characterization of those Young
graphs that are complete graphs.

\begin{theorem}\label{ThC}
A necessary and sufficient condition for a
Young graph to be a complete graph is that
every node label has the form $[r,r]$.
\end{theorem}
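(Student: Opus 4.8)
The plan is to convert edge-existence into an arithmetic condition and then check that condition in each direction. By property (P1), equation \eqn{Eq93.5}, once the two end-nodes of a putative edge are fixed the digit pair $(a_{n-1-i},a_i)$ is \emph{forced}; conversely, if the two values produced by \eqn{Eq93.5} are integers in $\{0,1,\ldots,g-1\}$, then (because the outgoing carries are the prescribed node-coordinates, which already lie in $[0,k)$) the congruence and the inequality of \eqn{EqY} hold automatically, so the edge is present in $H(g,k)$ and survives pruning whenever both end-nodes are pivots. Thus ``the Young graph is complete'' is equivalent to ``for every ordered pair of its nodes the two numbers given by \eqn{Eq93.5} are integers in $[0,g)$,'' and I would prove the two implications against this criterion.

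For sufficiency, assume every node has the form $[r,r]$ with $0\le r<k$. For the edge $[r,r]\to[s,s]$, \eqn{Eq93.5} gives $a_i=u_s+v_r$ and $a_{n-1-i}=u_r+v_s$, where $u_s=s(kg-1)/(k^2-1)$ and $v_s=s(g-k)/(k^2-1)$; a one-line computation yields the identity $u_s=kv_s+s$, so $u_s\in\Z$ iff $v_s\in\Z$. First I would show $u_s,v_s\in\Z$ for \emph{every} present node, by propagating outward from the starting node along any path $[[0,0]]\to[s_1,s_1]\to\cdots\to[s,s]$: at the start $u_0=v_0=0$; the first edge forces $a_i=u_{s_1}$ and $a_{n-1-i}=v_{s_1}$ to be (integer) digits; and along each later edge the integer digit $a_{n-1-i}=u_{s_j}+v_{s_{j+1}}$ together with $u_{s_j}\in\Z$ gives $v_{s_{j+1}}\in\Z$, hence $u_{s_{j+1}}=kv_{s_{j+1}}+s_{j+1}\in\Z$. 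Granting integrality, the forced digits for \emph{any} ordered pair of present nodes are nonnegative integers, and the bound $u_s+v_r\le \frac{(k-1)[(kg-1)+(g-k)]}{k^2-1}=g-1$ (using $r,s\le k-1$) keeps them below $g$. By the criterion every ordered pair is joined by an edge; loops ($s=r$) and the starting-node edges (labels $(v_s,u_s)\ne(0,0)$ for $s\ne 0$) are just the special cases, so the graph is complete.

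For necessity, assume the Young graph is complete and suppose, for contradiction, that it contains a node $[p,q]$ with $p\ne q$ (so $0\le p,q\le k-1$). By (P3) the reversed node $[q,p]$ is also present, and by completeness there are edges $[0,0]\to[p,q]$ and $[0,0]\to[q,p]$. Reading \eqn{Eq93.5} for these (source $[0,0]$) forces the left digits $\frac{qg-kp}{k^2-1}$ and $\frac{pg-kq}{k^2-1}$ to be integers, i.e. $qg\equiv kp$ and $pg\equiv kq \pmod{k^2-1}$. The decisive step is to eliminate $g$: multiplying the first congruence by $p$ and the second by $q$ gives $kp^2\equiv pqg\equiv kq^2 \pmod{k^2-1}$, whence $(k^2-1)\mid k(p^2-q^2)$; since $\gcd(k,k^2-1)=1$, this forces $(k^2-1)\mid(p^2-q^2)$. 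But $0<|p^2-q^2|\le(k-1)^2<k^2-1$, which is impossible. Hence every node is diagonal.

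The step I expect to carry the weight is the integrality propagation in sufficiency: the range bound $u_s+v_r\le g-1$ is immediate, but it is useless until one knows that the $u_s,v_s$ are genuinely integers, and that fact is not local to a single node—it must be threaded along paths back to the starting node, using $u_s=kv_s+s$ at each step. In the necessity direction the only delicate point is making sure completeness really supplies \emph{both} edges out of $[0,0]$, which is exactly where (P3) is needed to guarantee that $[q,p]$ is present whenever $[p,q]$ is; once both congruences are available, the ``multiply and cancel $g$'' trick together with $\gcd(k,k^2-1)=1$ and the trivial size bound closes the argument uniformly in $k$, including the cases in which $p$ or $q$ equals $0$.
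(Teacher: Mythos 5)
Your proof is correct, but it runs on a different engine than the paper's, most visibly in the necessity direction. The paper proves necessity structurally: given a node $[r',r]$ with $r'\ne r$, completeness and (P3) supply the mutual pair of edges $[r',r]\rightarrow[r,r']\rightarrow[r',r]$, whose labels are $(i,i)$ by (P5); reading the carry equations \eqn{Eq6} along the $5$-digit reverse multiple $(j,i,i,i,h)_g$ built from this path gives $ki+r=i+r'g$ and $ki+r'=i+rg$, and subtracting forces $(r'-r)(g+1)=0$, a contradiction. You never touch the pair-edge: you use completeness to get the two edges $[0,0]\rightarrow[p,q]$ and $[0,0]\rightarrow[q,p]$, convert their label-integrality (via \eqn{Eq93.5}) into the congruences $qg\equiv kp$ and $pg\equiv kq \pmod{k^2-1}$, eliminate $g$, and finish with the divisibility bound $0<|p^2-q^2|\le (k-1)^2<k^2-1$. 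Both close the gap; the paper's route is purely linear manipulation of the carry equations, while yours uses a little number theory but needs only the edges out of $[0,0]$ and not property (P5). In the sufficiency direction you are closer to the paper in substance but more complete in form: the paper builds explicit reverse multiples ($N=(b,a+b,a)_g$ for the loop, and analogues for the other edges) and then says ``similar arguments'' and ``iterating these arguments,'' whereas you isolate the exact edge-existence criterion---the two values in \eqn{Eq93.5} must be integers in $[0,g)$, the carry constraints being automatic because node coordinates already lie in $[0,k)$---then propagate integrality of $u_s, v_s$ along paths from the starting node via the identity $u_s=kv_s+s$, and finally verify the criterion for every ordered pair with the uniform bound $u_s+v_r\le g-1$. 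This makes fully explicit the induction that the paper leaves implicit, which is a genuine gain in rigor; the paper's version, in exchange, exhibits the actual reverse multiples witnessing each edge, which is what lets it lean on Young's Theorem \ref{Th1} rather than on the mechanics of the graph construction.
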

\begin{proof}
(i) Suppose the Young graph is a complete graph, but there is a node
labeled $[r',r]$ with $r' \ne r$.
By (P5), the edge from $[r',r]$ to $[r,r']$ has label $(i,i)$ for some $i$,
and the edge from $[r,r']$ to $[r',r]$ is also labeled $(i,i)$.
Suppose the edge from $[[0,0]]$ to $[r',r]$ is labeled $(j,h)$,
and consider the path
$$
[[0,0]] \xrightarrow{(j,h)} [r',r] \xrightarrow{(i,i)} [r,r'].
$$
Taking the right-hand node $[r,r']$ to be the pivot, we see that
$N = (j,i,i,i,h)_g$ is a $(g,k)$-reverse multiple.
Equations \eqn{Eq6} imply 
$kh=j+rg$,
$ki+r = i+r'g$, 
$ki+r' = rg+i$, 
and the latter two equations imply $(r'-r)g=r-r'$,
so $r=r'$, a contradiction.

(ii) Conversely, suppose all node labels have the form $[r,r]$.
Since the graph is connected, there is an edge
$[[0,0]] \xrightarrow{(b,a)} [r,r]$ for some $r$.
From \eqn{Eq93.5},
$a = r(kg-1)/(k^2-1)$,
$b = r(g-k)/(k^2-1)$,
which imply $a+b = r(g-1)/(k-1) \le g-1$, and so
$N = (b,a+b,a)_g$ is a $(g,k)$-reverse multiple,
and therefore there is a loop at $[r,r]$ with edge-label
$(a+b,a+b)$.
Similar arguments show that if there are edges 
$[[0,0]] \xrightarrow{(b,a)} [r,r]$ and 
$[[0,0]] \xrightarrow{(d,c)} [s,s]$,
then there is an edge 
$[r,r] \xrightarrow{(a+d,b+c)} [s,s]$; and
if there is a pair of edges 
$[[0,0]] \xrightarrow{(b,a)} [r,r] \xrightarrow{(f,e)} [s,s]$,
then there is an edge 
$[[0,0]] \xrightarrow{(f-a,e-b)} [s,s]$.
Iterating these arguments shows that there is a directed 
edge between every pair
of nodes, and every node has a loop.
\end{proof}

If true, the following conjecture,
would explain when the complete graph $K_m$ occurs:

\begin{conj}\label{Conj24}
A necessary and sufficient condition for the $(g,k)$ Young
graph to be the complete graph $K_m$ is that there are exactly
$m-1$ distinct integers $r$ such that
\beql{Eqba}
a = r \frac{kg-1}{k^2-1} < g {~and~}
b = r \frac{g-k}{k^2-1}
\eeq
are positive integers.
\end{conj}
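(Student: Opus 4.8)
The plan is to reduce everything to Theorem~\ref{ThC} by setting up a bijection between the integers $r$ of \eqn{Eqba} and the nonzero diagonal nodes of the graph. Recall from \eqn{Eq93.5} that \emph{every} edge $[[0,0]] \xrightarrow{(b,a)} [r,r]$ out of the starting node forces $a = r(kg-1)/(k^2-1)$ and $b = r(g-k)/(k^2-1)$; conversely, as in the proof of Theorem~\ref{ThC}(ii), whenever these two quantities are positive integers with $a<g$ one checks that automatically $r<k$ and $a+b = r(g-1)/(k-1) \le g-1$, so that $N=(b,a+b,a)_g$ is a genuine $(g,k)$-reverse multiple whose pivot is $[r,r]$ (a short computation gives $r_0=r_1=r$), and the node $[r,r]$ occurs. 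Hence the $r$ counted in \eqn{Eqba} are exactly the nonzero diagonal nodes $[r,r]$ carrying a direct edge from $[[0,0]]$. For later use one may note that these $r$ are precisely the multiples of $(k^2-1)/\gcd(g-k,k^2-1)$ lying in $\{1,\ldots,k-1\}$, so the asserted number $m-1$ equals $\lfloor \gcd(g-k,k^2-1)/(k+1)\rfloor$.

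For necessity, assume the Young graph is $K_m$. Its non-starting nodes are $m$ diagonal nodes $[0,0],[r_2,r_2],\ldots,[r_m,r_m]$, and by the definition of $K_m$ the starting node is joined to each of the $m-1$ nonzero ones; each such edge exhibits its index $r$ as a solution of \eqn{Eqba}. Conversely any solution of \eqn{Eqba} produces a node $[r,r]$, which must be one of these same $m-1$ nonzero diagonal nodes since $K_m$ has no others. The correspondence is a bijection, so there are exactly $m-1$ admissible $r$.

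For sufficiency, assume \eqn{Eqba} has exactly $m-1$ solutions. Each yields a nonzero diagonal node, and together with the node $[0,0]$, present by (P4), we obtain $m$ diagonal nodes. The explicit reverse-multiple constructions in the proof of Theorem~\ref{ThC}(ii) produce, from the starting edges to the $m-1$ nonzero diagonal nodes alone, an edge between every pair of these $m$ nodes and a loop at each; thus they already contain a copy of $K_m$. By Theorem~\ref{ThC} it now suffices to prove that the Young graph has \emph{no further nodes}: once every node is known to be diagonal, the bijection of the first paragraph shows every nonzero diagonal node carries a starting edge and hence is one of the $m-1$ admissible ones, so the graph has exactly these $m$ nodes and is $K_m$.

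Excluding further nodes is the crux, and I expect it to be the genuine obstacle. Let $X$ be a node at minimal distance from $[[0,0]]$ among those outside the known set of $m$ diagonal nodes; then $X$ is reached by an edge either from $[[0,0]]$ or from some diagonal node $[r,r]$. The strategy is a \emph{diagonal-preservation} count: using the congruence and the inequality in \eqn{EqY} together with \eqn{Eq93.5}, show that the solutions of \eqn{EqY} issuing from the starting node land only on the $m-1$ nonzero diagonal nodes, and that those issuing from each $[r,r]$ land only on the $m$ diagonal nodes, so the out-degrees are saturated by diagonal targets and no edge to $X$ can exist. The symmetry (P3), which pairs any off-diagonal $[r',r]$ with $[r,r']$ and converts a hypothetical starting edge $[[0,0]]\to[r',r]$ into an edge $[r,r']\to[0,0]$ entering $V_1$, should reduce the off-diagonal case to the behaviour at $V_1=[0,0]$. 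The difficulty is that \eqn{Eqba} is a condition purely \emph{on the diagonal}, whereas off-diagonal nodes are a global feature of $H(g,k)$; bridging that gap—proving that having exactly $m-1$ admissible $r$ forces every solution of \eqn{EqY} to remain diagonal—is precisely what has kept the statement a conjecture. The most promising concrete route seems to be to compute the out-degree of a diagonal node $[r,r]$ from \eqn{EqY} in closed form and match it against $m$, thereby turning the global claim into the explicit identity $m-1=\lfloor \gcd(g-k,k^2-1)/(k+1)\rfloor$ isolated above.
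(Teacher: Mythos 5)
First, a point of status: this statement is Conjecture~\ref{Conj24} of the paper, which the paper explicitly leaves open (and later uses as a hypothesis in Theorem~\ref{ThC2}); there is no proof in the paper to compare against, so your attempt stands or falls on its own. Much of your preparatory work is correct. The bijection between solutions $r$ of \eqn{Eqba} and nonzero diagonal nodes $[r,r]$ receiving an edge \emph{directly} from the starting node is right (it is essentially (P7) combined with the computation in part (ii) of the proof of Theorem~\ref{ThC}); so is your observation that $a<g$ automatically forces $r<k$ and hence $a+b=r(g-1)/(k-1)\le g-1$; so is the closed form $m-1=\lfloor \gcd(g-k,k^2-1)/(k+1)\rfloor$; and the necessity direction does follow from Theorem~\ref{ThC} exactly as you argue.

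The sufficiency direction, however, is not proved: you present only a ``strategy,'' and you concede it is the crux. The gap is genuine, and the proposed ``diagonal-preservation'' lemma cannot work as a standalone statement, because it is false in general. In the $(8,5)$ Young graph of Fig.~\ref{Fig8_5}, the diagonal node $[0,0]$ has an outgoing edge to the off-diagonal node $[0,3]$ (this follows from (P3) applied to the edge $[3,0]\rightarrow[0,0]$), and the nonzero diagonal nodes $[1,1]$, $[3,3]$, $[4,4]$ all exist even though \eqn{Eqba} has \emph{no} solutions there ($\gcd(g-k,k^2-1)=3$, so $\lfloor 3/6\rfloor = 0$). Thus solutions of \eqn{EqY} issuing from diagonal nodes do not in general stay diagonal, and nonzero diagonal nodes need not be visible from the starting node, so your bijection controls only the nodes adjacent to $[[0,0]]$, not the rest of the graph. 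The same example shows that the sufficiency implication is outright false in the degenerate case $m=1$ (zero solutions, yet the Young graph is far from $K_1$), so any correct argument must make essential use of the existence of at least one admissible $r$; your sketch gives no indication of where that hypothesis would enter. Finally, note that your reduction ``it suffices to show every node is diagonal'' concedes the whole problem: by Theorem~\ref{ThC}, ``all nodes diagonal'' is already \emph{equivalent} to completeness, so the entire content of the conjecture is precisely the step you leave open---the same difficulty the paper acknowledges by stating the stronger Conjecture~\ref{ConjC2} as open as well.
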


\begin{theorem}\label{ThC2}
(i) If the conditions of Conjecure \ref{Conj24} hold
and the $(g,m)$ Young graph is the complete graph $K_m$,
then (ignoring the starting node) the node labels
are $[ir_0, ir_0]$, $0 \le i \le m-1$, where $r_0$ is
the smallest of the $m-1$ values of $r$.
The edge from the starting node (or from the
node $[0,0]$) to $[ir_0, ir_0]$ has label
$(i r_0 (g-k)/(k^2-1), i r_0 (kg-1)/(k^2-1))$.
(ii) The generating function $\sC(x)$ is
\beql{EqGFK2}
\frac{(m-1)x^2(1+x)}{1-mx^2} ~=~ (m-1)(x^2+x^3+mx^4+mx^5+
m^2x^6+m^2x^7+m^3x^8+\cdots) \,.
\eeq
(iii) The $(g,k)$-reverse multiples are all numbers of the form $\gamma \beta$,
where
$\gamma = (r_0 (g-k)/(k^2-1), r_0 (kg-1)/(k^2-1))_g$
and $\beta$ is any positive number
whose base-$g$ expansion is palindromic and contains only
the digits $0,1,\ldots,m-1$.
\end{theorem}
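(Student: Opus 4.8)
The plan is to treat the three parts in order, leaning throughout on Theorem \ref{ThC} (so that every node carries a label $[r,r]$) and on the edge-label formula \eqn{Eq93.5}. For (i), I would first observe that the set of integers $r$ for which both $r(kg-1)/(k^2-1)$ and $r(g-k)/(k^2-1)$ are integers is closed under addition and subtraction, hence is an ideal $r_0\Z$ of $\Z$ whose least positive generator is the $r_0$ named in the statement. For $r>0$ both quantities are automatically positive (since $k<g$), so the only remaining condition in Conjecture \ref{Conj24} is $r(kg-1)/(k^2-1)<g$, i.e. $r<g(k^2-1)/(kg-1)$. Thus the admissible $r$ are exactly the multiples of $r_0$ in this interval, which are consecutive; as there are exactly $m-1$ of them by hypothesis, they must be $r_0,2r_0,\dots,(m-1)r_0$. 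Adjoining the node $[0,0]$ (present by (P4)) gives the node labels $[ir_0,ir_0]$, $0\le i\le m-1$. Substituting $r_{i-1}=r_{n-1-i}=0$ (source $[0,0]$ or $[[0,0]]$) and $r_i=r_{n-2-i}=ir_0$ (target) into \eqn{Eq93.5} then yields precisely the label $(ir_0(g-k)/(k^2-1),\,ir_0(kg-1)/(k^2-1))$, and shows it is independent of whether the source is the starting node or the internal $[0,0]$.

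For (ii) I would read the combinatorial structure of $K_m$ off directly rather than invert a matrix symbolically. From the starting node there are $m-1$ outgoing edges (to every node except $[0,0]$), while from each of the $m$ internal nodes there are $m$ outgoing edges (the graph is complete with a loop at every node); hence the number of length-$t$ paths from $[[0,0]]$ into the internal nodes is $(m-1)m^{t-1}$. Every internal node is a pivot, so by Theorem \ref{Th1}(ii) this is $c_{2t}$, giving $\sP(x)=\sum_{t\ge1}(m-1)m^{t-1}x^{2t}=(m-1)x^2/(1-mx^2)$. Since every node $[r,r]$ also carries a loop, each is simultaneously an odd pivot, so the same paths (with one central digit inserted) enumerate the odd-length multiples and $\sQ(x)=x\,\sP(x)$, exactly as in the $(5,2)$ computation \eqn{GF52}. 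Summing gives $\sC(x)=(1+x)\sP(x)=(m-1)x^2(1+x)/(1-mx^2)$, and expanding the geometric series produces the stated coefficients.

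For (iii) I would first compute the general edge label: substituting source $[pr_0,pr_0]$ and target $[qr_0,qr_0]$ into \eqn{Eq93.5} gives $(pA_0+qB_0,\,qA_0+pB_0)$, where $A_0=r_0(kg-1)/(k^2-1)$ and $B_0=r_0(g-k)/(k^2-1)$ are the two digits of $\gamma=(B_0,A_0)_g=B_0g+A_0$. Given a reverse multiple $N$ with its path (Theorem \ref{Th1}), let $\beta_0,\beta_1,\dots$ be the indices $i$ of the successive internal nodes $[ir_0,ir_0]$ visited, read out and back, so that $\beta$ is palindromic with digits in $\{0,1,\dots,m-1\}$. The right-hand edge label at each step is $A_0\beta_i+B_0\beta_{i-1}$, which is exactly the coefficient of $g^i$ in $(B_0g+A_0)\sum_i\beta_ig^i=\gamma\beta$ before carrying; summing the path digits against powers of $g$ therefore gives $N=\gamma\beta$. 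Conversely, because $K_m$ is complete, every palindromic $\beta$ over $\{0,\dots,m-1\}$ with nonzero leading digit traces a legitimate path (the nonzero leading digit ensures the first step leaves $[[0,0]]$ without using the forbidden label $(0,0)$), so each $\gamma\beta$ is a reverse multiple. This yields the claimed bijection, and the palindrome counts $(m-1)m^{\lceil s/2\rceil-1}$ agree with the coefficients of $\sC(x)$ from (ii).

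The routine parts are the counting step (ii) and the edge-label substitutions. I expect the two genuinely delicate points to be, first, the claim in (i) that the admissible multipliers $r$ form a single arithmetic progression — this is exactly what the ideal argument buys and what forces the node labels to be equally spaced — and second, the verification in (iii) that the integer $\gamma\beta$ is literally the number $N$ read off the path. The latter requires keeping the palindrome indexing aligned with the out-and-back traversal (including the single central digit in the odd case and the one shared pivot digit in the even case) and checking that the pre-carry column sums $A_0\beta_i+B_0\beta_{i-1}$ are precisely the edge labels supplied by \eqn{Eq93.5}; once that identity is secured, the absence of any adjacency restriction on $\beta$ — in contrast to the ``no singleton'' condition for the 1089 graph of Theorem \ref{Th1089} — is immediate from the completeness of $K_m$.
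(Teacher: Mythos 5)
Your proposal is correct, and while your parts (i) and (ii) run parallel to the paper's --- the paper dismisses (i) with ``the values of $r$ are multiples of $r_0$, but this is easily shown'' and gets (ii) from the adjacency matrix exactly as in \eqn{GF52}, so your ideal-in-$\Z$ argument and your direct path count $(m-1)m^{t-1}$ are just honest expansions of those one-liners --- your part (iii) takes a genuinely different route. The paper argues in two separate steps: first, writing $\gamma=(b,a)_g$, it imports the bound $(m-1)(a+b)<g$ from the proof of Theorem \ref{ThC}, so the long multiplication $\gamma\beta$ has no carries, and then the identity $k\gamma\beta=(k\gamma)\beta=\Reverse(\gamma)\beta=\Reverse(\gamma\beta)$ shows each $\gamma\beta$ is a reverse multiple; second, exhaustiveness is proved by counting: the generating function for the admissible $\beta$'s of each length coincides with \eqn{EqGFK2}, so nothing is missed. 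You instead prove both inclusions simultaneously by identifying the path-readout of Theorem \ref{Th1} with long multiplication digit by digit: the edge label from $[pr_0,pr_0]$ to $[qr_0,qr_0]$ computed from \eqn{Eq93.5} is $(pA_0+qB_0,\,qA_0+pB_0)$, which is exactly the pre-carry column sum of $\gamma\beta$, and completeness of $K_m$ lets every admissible palindrome be traced as a path. Each approach buys something. The paper's $\Reverse$-identity sidesteps the out-and-back index bookkeeping that you rightly flag as the delicate point ($\beta$ has one fewer digit than $N$, the pivot index entering once for even-length $N$ and twice, via the loop, for odd-length $N$), but it must then do the separate counting step. Your identification needs that bookkeeping but makes the counting step a mere consistency check, and it gets the no-carry bound for free: the loop at $[(m-1)r_0,(m-1)r_0]$ is an edge of the Young graph, so its label $(m-1)(A_0+B_0)$ is by construction a digit, hence less than $g$, whereas the paper rederives this from $r\le k-1$ as in Theorem \ref{ThC}. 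One point you should make explicit in (i): after showing the admissible $r$ are $r_0,2r_0,\dots,(m-1)r_0$, you still need to match them to the graph, i.e.\ note that each of the $m-1$ nonzero nodes of $K_m$ receives an edge from the starting node whose label, by \eqn{Eq93.5}, forces that node's $r$ to be admissible; since node labels are distinct (P6), the $m-1$ nonzero labels exhaust the $m-1$ admissible values. This is at the same level of detail the paper itself omits, so it is a polish item, not a gap.
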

\begin{proof} (Sketch)
(i) The only thing to be proved is that the values of $r$ are 
multiples of the smallest value, $r_0$,  but this is easily shown.
(ii) This follows from the adjacency matrix, analogous to the proof of \eqn{GF52}.
(iii) From the Young graph we know $\gamma$ is the smallest reverse multiple
and that all the two-digit reverse multiples are the numbers $i \gamma$ 
for $1 \le i \le m-1$.
If we write $\gamma = (b,a)_g$, then as in the proof of Theorem
\ref{ThC} we have $(m-1)(a+b) < g$.
This means that when we calculate $\gamma \beta$ (where
$\beta$ is as in the statement of the theorem)
by `long multiplication', there
are no carries.
We know $k \gamma = \Reverse(\gamma)$.
Since there are no carries, $k \gamma \beta = (k \gamma) \beta
= \Reverse(\gamma) \beta = \Reverse(\gamma \beta)$,
showing that $\gamma \beta$ {\em is} a reverse multiple.
On the other hand a counting argument shows that the 
generating function for the number of $\beta$'s of given length is 
equal to \eqn{EqGFK2} with the factor $x^2$ in the numerator
replaced by $x$.
The generating function for the $\gamma \beta$'s therefore coincides with 
\eqn{EqGFK2}, and it
follows that we have found all the reverse multiples.
\end{proof}

It appears that the first occurrence of $K_m$ 
is when $g=m^2+m-1$ and $k=m$ (and the values
of $r$ in \eqn{Eqba} are simply 1 through $m-1$).
This is certainly true for $m \le 9$, and explains
the $(5,2)$, $(11,3)$, $(19,4)$, $(29,5)$, $(41,6), ~\ldots$
Young graphs.

We end this section with a conjecture which is
much stronger than the result in Theorem \ref{ThC}.
It is, however, consistent with all the data.

\begin{conj}\label{ConjC2}
A necessary and sufficient condition for a 
Young graph to be a complete graph is that there is at least  one
node label $[r, r]$, $r \ne 0$.
\end{conj}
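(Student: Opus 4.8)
The plan is to deduce Conjecture~\ref{ConjC2} from Theorem~\ref{ThC}, which already says that a Young graph is complete precisely when \emph{every} node label has the form $[r,r]$. Modulo that theorem, the conjecture asserts that a Young graph contains a single nonzero diagonal node $[r,r]$ (with $r \ne 0$) if and only if all of its node labels are diagonal. The necessity direction is short: if the graph is complete then by Theorem~\ref{ThC} every node label is diagonal, and since a Young graph exists only if $H(g,k)$ contains a nonzero pivot node, while in a complete graph every non-starting node is a pivot, there must be a non-starting node other than the zero pivot $[0,0]$; being diagonal, it is a node $[r,r]$ with $r \ne 0$. All of the difficulty therefore lies in the sufficiency direction, namely that the presence of \emph{one} nonzero diagonal node forces \emph{every} node label to be diagonal---equivalently, that no off-diagonal node $[s',s]$ with $s' \ne s$ can occur.

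To prove sufficiency I would argue by contradiction, adapting the computation that closes the proof of Theorem~\ref{ThC}(i). Suppose the graph has an off-diagonal node $[s',s]$ with $s' \ne s$; by (P3) its conjugate $[s,s']$ is also present. The aim is to manufacture a $(g,k)$-reverse multiple whose path realizes the local configuration $[[0,0]] \xrightarrow{(j,h)} [s',s] \xrightarrow{(i,i)} [s,s']$, with $[s,s']$ read as the pivot: here (P5) forces the middle edge-label to have the form $(i,i)$, so the corresponding number is $N = (j,i,i,i,h)_g$, and the relevant pair of equations in \eqn{Eq6} collapses to $(s'-s)g = s-s'$. This is impossible when $s' \ne s$, since the left side then has magnitude at least $g$ while the right side, with $0 \le s,s' < k < g$, has magnitude less than $g$. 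Thus the whole sufficiency direction reduces to showing that the mere existence of a nonzero diagonal node somewhere in the graph guarantees that such a contradictory configuration can always be assembled.

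The hard part will be supplying, from the single hypothesis ``there is a nonzero diagonal node $[\rho,\rho]$'', the adjacencies that completeness provided for free in Theorem~\ref{ThC}(i): there one could take both the edge $[s',s] \to [s,s']$ into the conjugate node and a direct edge $[[0,0]] \to [s',s]$, whereas now the off-diagonal node $[s',s]$ may lie far from both the starting node and the diagonal node $[\rho,\rho]$, and there is no \emph{a priori} edge from $[s',s]$ to its conjugate. I would therefore have to propagate constraints along paths, using \eqn{Eq93.5} to follow the carry pair edge by edge, and the doubling/reversal device of (P3) to splice the path through $[\rho,\rho]$ onto the path through $[s',s]$ into one reverse multiple whose carry equations can be driven into the form $(s'-s)g = s-s'$. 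Because the equations \eqn{Eq6} relate only adjacent carries, leveraging a possibly distant diagonal node to pin down a distant off-diagonal node is exactly the delicate point; making that propagation work uniformly---rather than only in the start-adjacent case already handled in Theorem~\ref{ThC}(i)---is the step I expect to be the genuine obstacle, and is presumably why the statement remains only a conjecture.
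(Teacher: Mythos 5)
You have not produced a proof, and you are right to suspect you could not have: the paper contains no proof of this statement. It is posed as an open conjecture, introduced as ``much stronger than the result in Theorem \ref{ThC}'' and supported only by computational data, so there is no argument in the paper for yours to parallel. Your necessity direction is fine (Theorem \ref{ThC} makes every label diagonal, and Theorem \ref{Th1}(i) forces a nonzero pivot, which in an all-diagonal graph must be some $[r,r]$ with $r \ne 0$). But the sufficiency direction, where you concede the work lies, is not merely ``delicate'': your plan is circular. The contradiction $(s'-s)g = s-s'$ in the proof of Theorem \ref{ThC}(i) is extracted from two adjacencies --- an edge $[[0,0]] \to [s',s]$ and an edge $[s',s] \to [s,s']$ --- and both are supplied there by the completeness hypothesis, i.e.\ by exactly the conclusion you want. (P3) cannot substitute: it produces the conjugate node $[s,s']$ and a reversed copy of an edge elsewhere in the graph, never an edge joining a node to its own conjugate, and the equations \eqn{Eq6} constrain only adjacent carry pairs, so a nonzero diagonal node imposes no usable condition on a distant off-diagonal node.

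The deeper issue is that no propagation scheme could close this gap, because the sufficiency direction as you (and the statement, read literally) interpret it is refuted by the paper's own examples. The $(10,9)$ Young graph of Fig.~\ref{Fig10_9} contains the nonzero diagonal node $[8,8]$ --- it is listed explicitly among the nodes in \S\ref{Sec32} and is a pivot --- yet that graph has four non-starting nodes and only six edges, far from the sixteen a complete graph requires; likewise $[3,3]$ in Fig.~\ref{Fig10_4}, and $[1,1]$, $[3,3]$, $[4,4]$ in the non-complete $(8,5)$ graph of Fig.~\ref{Fig8_5}. So the bare hypothesis ``some node label is $[r,r]$ with $r \ne 0$'' cannot force completeness, and the obstacle you flag at the end is not a technical difficulty but the falsity of the literal claim. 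Any viable reading must restrict where the nonzero diagonal node sits --- most plausibly that it be a successor of the starting node, i.e.\ that a two-digit reverse multiple exist, in line with (P7) and Conjecture \ref{Conj24}, a reading consistent with all the graphs displayed in the paper. A sound attempt would have begun by testing the statement against Figs.~\ref{Fig10_4}, \ref{Fig10_9} and \ref{Fig8_5}, pinned down the intended hypothesis, and only then attacked the hard direction.
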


\subsection{The cyclic Young graphs $Z_m$.}\label{Sec36}
The complete graphs described in the previous section
have the maximum number of edges for the given number of nodes.
In the other direction, if there are $m \ge 3$ nodes (not counting
the starting node) then it appears
that the {\em minimum} number of edges in any Young graph is $m+2$.
This is certainly true for $g \le 100$.

Furthermore, it appears that a Young graph
with $m \ge 3$ nodes and $m+2$ edges is always the
``cyclic'' graph $Z_{m-1}$, which we define to
consist of a directed cycle of $m-1$ nodes,
in which one of the edges is paralleled by 
a path of length two passing through the node $[0,0]$,
where there is a loop. We also define $Z_1$ to be $C_2$,
the two-node complete graph in Fig. \ref{Fig5_2}.
The graphs $Z_2$, $Z_3$, $Z_4$ are shown in Fig. \ref{FigCyclic}
and $Z_5$ in Fig. \ref{Fig18_7}.
In $Z_m$, the node $[0,0]$ is always
an odd pivot. 
If $m>1$ is odd there are two even pivot nodes, namely
$[0,0]$ and the node half-way around the cycle (e.g., 
$[8,8]$ in $Z_3$).
If $m$ is even, $[0,0]$ is the only even pivot node
and there is a second odd pivot half-way around
the cycle (e.g., $[5,10]$ in $Z_4$).

\begin{figure}[htb]
\centerline{\includegraphics[width=5.3in]{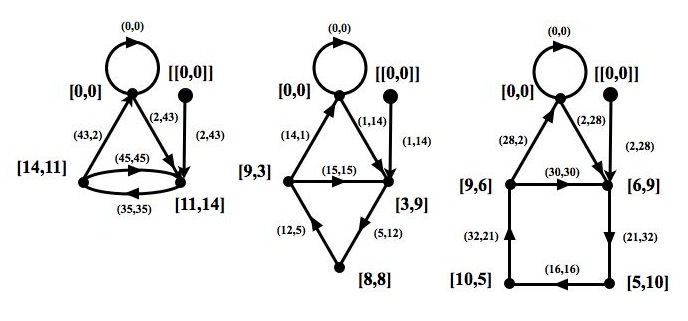}}
\caption{Cyclic (49,16), (17,11) and (34,11) Young graphs 
$Z_2$, $Z_3$ and $Z_4$.}
\label{FigCyclic}
\end{figure}

These cyclic graphs appear to be harder to analyze than the
complete graphs: we have no
analog of Conjecture \ref{Conj24} and
only a conjectural analog of Theorem \ref{ThC2}.
The sequence of pairs $(g,k)$ where the cyclic graphs 
$Z_1$, $Z_2, \ldots$, $Z_9$ first occur
is
\beql{EqFirst}
(5,2), (49,16), (17,11),
(34,11), (18,7), (33,14), (49,39), (77,46),
(63,40), 
\eeq
which has no apparent pattern.

\begin{conj}\label{ConjK1}
(i) For $m \ge 1$, there exists a pair $(g,k)$ for which the Young graph is $Z_m$.
(ii) If the $(g,k)$ Young graph is $K_m$, then the generating function
$\sC(x)$ is
\beql{GFCyclic}
\sC(x) ~=~ \frac{x^{m+1}(1+x)(1-x+x^m)}{1-x^2-x^{2m}}\,.
\eeq
(iii) Let $\gamma$ denote the smallest reverse multiple,
of length $m+1$. 
The $(g,k)$-reverse multiples are all numbers of the form $\gamma \beta$,
where $\beta$ is any positive number
whose base-$g$ expansion is palindromic and contains only
the digits $0$ and $1$, does not contain $11$, and in which any run
of consecutive $0$'s has length at least $m-1$.
\end{conj}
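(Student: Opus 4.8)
The plan is to treat the three parts separately, since only (i) is a true existence statement: parts (ii) and (iii) are conditional — they presuppose that the $(g,k)$ Young graph already \emph{is} $Z_m$ (in (ii) I read the stated $K_m$ as $Z_m$, as the section requires) — and can be established unconditionally by the same machinery that proves Theorems \ref{Th1089} and \ref{ThC2}. Only (i), the assertion that for every $m$ \emph{some} pair $(g,k)$ realizes $Z_m$, carries real content, and I expect it to be the main obstacle.

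For (ii) I would run the transfer-matrix method of \S\ref{Sec32}, but exploit the cyclic structure rather than invert an $(m+2)\times(m+2)$ matrix. The denominator $1-x^2-x^{2m}$ in \eqn{GFCyclic} comes from the excursion generating function at a node of the length-$m$ cycle: a single return either runs once around that cycle, contributing $x^{2m}$, or takes the two-edge detour through $[0,0]$ together with an arbitrary number $j\ge 0$ of loop traversals, contributing $x^{2m+2}\sum_{j\ge0}x^{2j}=x^{2m+2}/(1-x^2)$; summing the geometric series of returns gives $(1-x^2)/(1-x^2-x^{2m})$. With this in hand I would read off the numerators of the entries $B|_{0,i}$ over the even pivots to form $\sP(x)$ and over the odd pivots to form $\sQ(x)$, using the parity-dependent pivot classification stated just before \eqn{FigCyclic} (for $m$ odd a second even pivot sits half-way around the cycle; for $m$ even a second odd pivot does), and then verify that $\sP+\sQ$ collapses to \eqn{GFCyclic}. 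This is bookkeeping rather than difficulty; the only care needed is in the parity split.

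For (iii) I would follow the ``word over an alphabet'' argument from the proof of Theorem \ref{Th1089}(iii). Reading the right-hand edge labels from the starting node to the first pivot yields the shortest reverse multiple $\gamma$, of length $m+1$; running around the cycle once inserts a block of $1$'s into the associated $\{0,1\}$-palindrome $\beta$, while each loop traversal at $[0,0]$ inserts $0$'s. The stated constraints on $\beta$ (palindromic, no $11$, every run of $0$'s of length at least $m-1$) are exactly the combinatorial translation of ``the short cycle may not be short-circuited'' and ``the detour through $[0,0]$ costs length $m$'', and the bookkeeping must be checked to reproduce these run-length conditions precisely. Finally, as in Theorem \ref{ThC2}(iii), I would verify that forming $\gamma\beta$ by long multiplication produces no carries, so that $\Reverse(\gamma\beta)=(k\gamma)\beta=\Reverse(\gamma)\beta$ and $\gamma\beta$ is genuinely a reverse multiple, and then match the generating function counting the admissible $\beta$ against \eqn{GFCyclic} to conclude that no reverse multiples have been omitted.

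The hard part will be (i). The natural route is to reverse-engineer the carries: one parametrizes the node labels $[r',r]$ around the intended cycle, uses \eqn{Eq93.5} to write each edge label $(a,b)$ as an affine function of $g$ and the $r_i$ over the common denominator $k^2-1$, and imposes that every such label is an integer in $[0,g)$ while the pivot conditions of Theorem \ref{Th1} hold around the loop and half-way across the cycle. Closing up the cycle forces the carries to return to their initial values, which yields a system of congruences and inequalities in $(g,k)$; the entries of \eqn{EqFirst} are particular solutions. The genuine obstruction — and, I suspect, the reason \eqn{EqFirst} shows no pattern — is the \emph{negative} requirement that $H(g,k)$ contain \textbf{no} solutions of \eqn{EqY} beyond the intended edges, and that the spurious nodes all prune away, so that the graph is exactly $Z_m$ and not some larger graph containing it. Controlling such solution-excluding conditions simultaneously for all $m$ is where I expect a uniform construction to break down; it may well be that (i) is only reachable through a yet-undiscovered arithmetic family of $(g,k)$ analogous to the identity $g=m^2+m-1$, $k=m$ that produces $K_m$.
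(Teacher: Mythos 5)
The statement you set out to prove is a conjecture in the paper, and the paper offers no proof of it: it remarks only that parts (ii) and (iii) ``would be a theorem if Conjecture \ref{Conj1089a} were known to be true,'' sketches that conditional argument, and leaves part (i) entirely open. Your handling of (i) --- an approach sketch that stops at the obstruction of excluding spurious solutions of \eqn{EqY} --- is consistent with its actual status, and your plan for (ii) (transfer-matrix computation exploiting the cycle structure; the paper's sketch says the adjacency matrix is ``a bordered circulant matrix'') matches the paper and is indeed routine bookkeeping once the labeled graph, with its pivot classification, is assumed to be $Z_m$.

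The genuine gap is in (iii), at exactly the step you treat as a closing verification: ``I would verify that forming $\gamma\beta$ by long multiplication produces no carries.'' Because $\beta$ has no adjacent $1$'s and every run of $0$'s has length at least $m-1$, two shifted copies of the $(m+1)$-digit number $\gamma$ overlap in at most one digit position, so the no-carry condition reduces to the inequality (first digit of $\gamma$) $+$ (last digit of $\gamma$) $< g$. Nothing in your hypotheses yields this. Theorem \ref{Thgm1bis} gives only $a_0 + a_{n-1} \le g$, with equality forcing $k+1 \mid g$; to exclude equality for a graph that is $Z_m$ rather than the 1089 graph one needs Conjecture \ref{Conj1089a}, which is open. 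Nor can you check the inequality by explicit computation, as is done for the 1089 graph (Theorem \ref{Th1089}) and for $K_m$ (Theorem \ref{ThC2}): for $Z_m$ there is no known formula for the node and edge labels --- the paper notes there is no analog of Conjecture \ref{Conj24} for the cyclic case --- so $\gamma$ is not explicitly known. (Your description of cycle traversals as ``inserting blocks of $1$'s into $\beta$'' quietly presupposes the factorization $N = \gamma\beta$, which is the thing to be proved; without the no-carry fact the path structure gives only the digit strings of $N$, not their factorization.) Consequently your (iii), and with it the claim that (ii) and (iii) ``can be established unconditionally,'' is in fact conditional on an open conjecture --- which is precisely why the paper states this result as a conjecture rather than a theorem.
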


Parts (ii) and (iii) would be a theorem if Conjecture \ref{Conj1089a} were known to be true. 
The proof would be similar to that of Theorem \ref{ThC2}.
We obtain the generating function from the adjacency matrix,
which is a bordered circulant matrix.
The smallest reverse multiple $\gamma$ can be read off the graph.
The difficulty lies in showing that $\gamma \beta$ is a reverse
multiple: for this we need to know that the sum of the first and
last digits of $\gamma$ is less than $g$, which would follow
from Theorem \ref{Thgm1bis} and Conjecture \ref{Conj1089a}.
The final step in the proof uses a simple counting argument to show
that if $u_n$ denotes the number of length-$n$ choices for $\beta$, then 
$u_n = u_{n-2}+u_{n-2m}$, giving a generating function 
which coincides with \eqn{GFCyclic}, and showing that we have indeed found
all the reverse multiples. 
(The sequence $\{u_n\}$ is C-finite, and the `C-finite Ansatz' \cite{KPCT}, 
\cite{Zeil} makes this final step routine.)

\section{Results for Bases $g \le 20$.}\label{Sec4}
A computer was used to find all values of $k$ for which
a $(g,k)$-reverse multiple exists, for $g \le 100$,
and (with one exception) to determine the generating functions
$\sP(x)$, $\sQ(x)$, $\sC(x)$ for the
numbers of reverse multiples.
The calculations were carried out using Maple 16.
The main program
computed the graphs $H(g,k)$, but did not do
any pruning, since the dead ends do not affect the generating functions.
Up to $g=100$, the two largest graphs are $H(58,45)$,
with 588 nodes and 640 edges, and $H(99,68)$,
with 784 nodes and 848 edges.
The program was able to compute the generating functions for all
except $H(99,68)$, where it was unable to compute the matrix $B$.

Two other programs were
used to prune the graphs (for $g \le 40$)
and to make a list of the first 50 or so
reverse multiples by following the paths through the graph.
(Although there is an algorithm for enumerating the paths through a directed
graph \cite{BSTJ}, it does not work well on graphs
with as many circuits as these Young graphs.)

\begin{figure}[htb]
\begin{minipage}[b]{0.45\linewidth}
\centering
\includegraphics[width=0.87\textwidth]{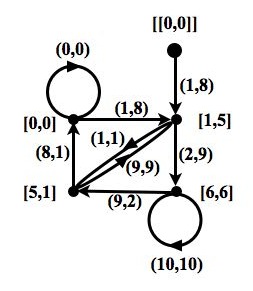}
\caption{The $(11,7)$ Young graph. All nodes are pivot nodes.}
\label{Fig11_7}
\end{minipage}
\hspace{0.5cm}
\begin{minipage}[b]{0.45\linewidth}
\centering
\includegraphics[width=\textwidth]{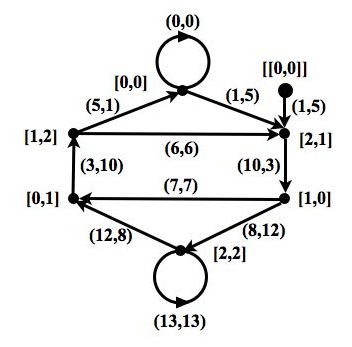}
\caption{The $(14,3)$ Young graph.
Even pivot nodes: $[0,0]$, $[2,2]$; odd pivot nodes: $[0,0]$,
$[1,0]$, $[1,2]$, $[2,2]$.}
\label{Fig14_3}
\end{minipage}
\end{figure}

The results for $g \le 20$ are summarized in Table \ref{gkTable}.
For an extended version of this table, see entry A222817 in \cite{OEIS};
the number of values of $k$ for each $g$ is given in A222819.

\renewcommand{\arraystretch}{1.2}
\begin{table}[htb]
$$
\begin{array}{|@{~}c@{~}|@{~}c@{~}||@{~}c@{~}|@{~}c@{~}|} \hline
g & k                                 & g & k \\ \hline
3 & 2^a                               & 12 & 2^a ,\,3^a ,\,5^a ,\,11^a \\
4 & 3^a                               & 13 & 5^b ,\,6^b ,\,12^a \\
5 & 2^b ,\,4^a                        & 14 & 2^b ,\,3^j ,\,4^b ,\,6^a ,\,9^j ,\,13^a \\
6 & 2^a ,\,5^a                        & 15 & 2^a ,\,3^b ,\,4^a ,\,7^b ,\,11^h ,\,14^a  \\
7 & 3^b ,\,6^a                        & 16 & 3^a ,\,7^a ,\,15^a  \\
8 & 2^b ,\,3^a ,\,5^h ,\,7^a          & 17 & 2^b ,\,4^i ,\,5^c ,\,8^b ,\,10^i ,\,11^e ,\,16^a \\
9 & 2^a ,\,4^b ,\,8^a                 & 18 & 2^a ,\,5^a ,\,7^f ,\,8^a ,\,17^a \\
10 & 4^a ,\,9^a                       & 19 & 3^c ,\,4^d ,\,6^i ,\,7^b ,\,9^b,\,14^m ,\,18^a \\
11 & 2^b ,\,3^c ,\,5^b ,\,7^i ,\,10^a & 20 & 2^b ,\,3^a ,\,4^a ,\,6^b ,\,9^a ,\,13^j ,\,19^a \\
\hline
\end{array}
$$
\caption{ $(g,k)$ values for which reverse multiples exist for $g \le 20$; letters specify the Young graph. See A222817, A222819 in \cite{OEIS} for $g \le 100$.}
\label{gkTable}
\end{table}
\renewcommand{\arraystretch}{1.0}

For $g \le 20$, up to isomorphism,
only ten different Young graphs appear,
indicated in Table~\ref{gkTable} by superscripts
$a$, $b$, $c, \ldots$.
The meaning of these letters is given below.
In this list we do not count the starting node
or the edges connected to it when giving the numbers of  
nodes and edges.
All the graphs in the table are palindromic (see \S\ref{Sec1}).
If the graph is the 1089 graph or a complete or cyclic graph, the
exact form of the reverse multiples has already been discussed.

$a$. The 1089 graph (\S\ref{Sec34}).

$b, c, d$. The complete graphs $K_2$, $K_3$, $K_4$ (\S\ref{Sec35}).

$e, f$. The cyclic graphs $Z_3$, $Z_5$ (\S\ref{Sec36}).

$h$. The graph shown in Fig. \ref{Fig8_5}, with
eight nodes and 16 edges. 
The generating function is 
\beql{GF113.2}
\sC (x) = \frac{x^4(1+x)}{1-2x^2} 
       = x^4 + x^5 + 2x^6 + 2x^7 + 4x^8 + 4x^9 + 8x^{10} + 8x^{11} + \cdots. 
\eeq
The reverse multiples are the numbers $\gamma \beta$,
where $\gamma  = (1,0,1,5)_8$ if $(g,k)=(8,5)$,
$\gamma = (1,0,2,11)_{15}$ if $(g,k)=(15,11)$,
and the base-$g$ expansion of $\beta$ is palindromic and contains only 0's and 1's.

$i$. The graph shown in Fig. \ref{Fig11_7}, with
four nodes and eight edges, and
\beql{GF110.9}
\sC (x) = \frac{x^3(1+x)}{1-2x^2}
        = x^3 + x^4 + 2x^5 + 2x^6 + 4x^7 + 4x^8 + 8x^{9} + 8x^{10} + \cdots. 
\eeq
This is the first time that the smallest $(g,k)$-reverse
multiple has an odd number of digits (e.g., the 
$(11,7)$-reverse multiple $(1,1,8)_{11}$).
At first this is rather worrying,
since the Sutcliffe-Kaczynski theorem (\cite{Sut66}, \cite{Kacz68}, \cite{Pud07})
states that if there is a 3-digit reverse
multiple then there is also a two-digit reverse 
multiple.\footnote{The assertion in \cite{Pud07} that Kaczynski
shows that ``if there exists a 3-digit solution \ldots,
then deleting the middle digit gives a 2-digit solution''
is based on a mis-reading of \cite{Kacz68}.}
The explanation is that their theorem allows one to change
the multiplier $k$. Here there is a three-digit $(11,7)$-reverse multiple.
There are no two-digit $(11,7)$-reverse multiples,
but there {\em is} a two-digit $(11,3)$-reverse multiple,
$(1,4)_{11}$.
For this graph the reverse multiples are the numbers $\gamma \beta$,
where the base-$g$ expansion of $\beta$ is again a binary palindrome,
and $\gamma$ is a 3-digit number ($(1,1,8)_{11}$ if $(g,k)= (11,7)$,
$(2,5,9)_{17}$ if $(g,k)=(17,4)$, \ldots).

\begin{figure}[htb]
\begin{minipage}[b]{0.45\linewidth}
\centering
\includegraphics[width=\textwidth]{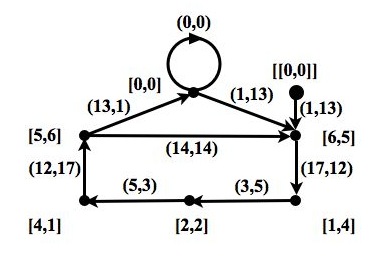}
\caption{The $(18,7)$ Young graph $Z_5$.}
\label{Fig18_7}
\vspace*{+.15in}
\end{minipage}
\hspace{0.5cm}
\begin{minipage}[b]{0.45\linewidth}
\centering
\includegraphics[width=1.0\textwidth]{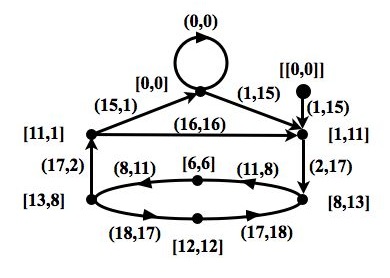}
\caption{The $(19,14)$ Young graph.}
\label{Fig19_14}
\end{minipage}
\end{figure}

$j$. The graph shown in Fig. \ref{Fig14_3}, with
six nodes and ten edges, and
\beql{GF108.8}
\sC (x) = \frac{x^5(1+x)}{1-x^2-x^4}        
= x^{5} + x^{6} + x^{7} + x^{8} + 2x^{9} + 2x^{10} + 3x^{11} + 3x^{12} + 5x^{13} + 5x^{14} + 8x^{15} +  \cdots.
\eeq
The $(g,k)$-reverse multiples are the numbers $\gamma \beta$,
where $\gamma$ is a 4-digit number
(e.g., $(1,8,12,5)_{14}$ if $(g,k)=(14,3)$) and
the base-$g$ expansion of $\beta$ is
palindromic, binary,
and does not contain any single $0$'s or $1$'s.
As in the case of the 1089 graph, $\gamma$ is not itself a reverse multiple. 

$m$. The graph shown in Fig. \ref{Fig19_14}, with
seven nodes and ten edges;
\beql{GF12.3}
\sC (x) = \frac{x^6(1+x)(1-x+x^4)}{1-x^2-x^{8}}        
= x^{6} + x^{10} + x^{11} + x^{12} + x^{13} + 2x^{14} + x^{15} + 2x^{16} + x^{17} +  \cdots ,
\eeq
where the coefficients satisfy $c_t = c_{t-2}+c_{t-8}$  for $n > 11$ (A226517).
This is $x$ times the generating function for $Z_4$.
The reverse multiples are the numbers $\gamma \beta$, where
$\beta$ is palindromic, binary,
and does not contain any run of 0's of length less than 3
or any pair of adjacent 1's. For $(g,k)= (19,14)$,
$\gamma = (1,2,11,8,17,15)_{19}$.

\vspace*{+.1in}
For $g > 20$ more complicated graphs appear.
We give three further examples.

For $(g,k) = (24,13)$, the graph $H(24,13)$ has (ignoring
the starting node)
24 nodes and 36 edges. Eight nodes disappear during the pruning process,
and the resulting 16-node 26-edge Young graph is shown in
Fig. \ref{Fig24_13}.
The pivot nodes are $[0,0]$, $[3,12]$,
$[5,5]$, $[5,12]$, $[7,0]$, $[7,7]$,
$[9,0]$, and $[12,12]$.
The generating function is surprisingly simple:
\beql{GF24.13}
\sC (x) = \frac{x^9(1+x)}{1-x^2-x^{6}}
        = x^{9} + x^{10} + x^{11} + x^{12} + x^{13} + x^{14} + 2x^{15} + 2x^{16} + 3x^{17} + 3x^{18} + 4x^{19} +  \cdots,
\eeq
and the reverse multiples are the numbers
of the form $\gamma \beta$, where
$\gamma = (23,9,8,0,16,13)_{24}$
and $\beta$ is any positive number whose base-24 expansion is
palindromic, contains only 0's and 1's,
and in which any run of 0's and 1's has length at least 3.
The smallest $(24,13)$-reverse multiple is the nine-digit
number $(1,0,9,16,18,1,6,5,13)_{24} = \gamma  (1,1,1)_{24}$,
which we can read off the path from the starting node
to the closest pivot, $[9,0]$.
The coefficients in \eqn{GF24.13} are essentially
the fourteenth-century Narayana cows sequence
(A000930) repeated, just as the coefficients
in \eqn{GF109} and \eqn{GF108.8} are the thirteenth-century
Fibonacci rabbits sequence (A000045) repeated.

\begin{figure}[htb]
\centerline{\includegraphics[width=3.0in]{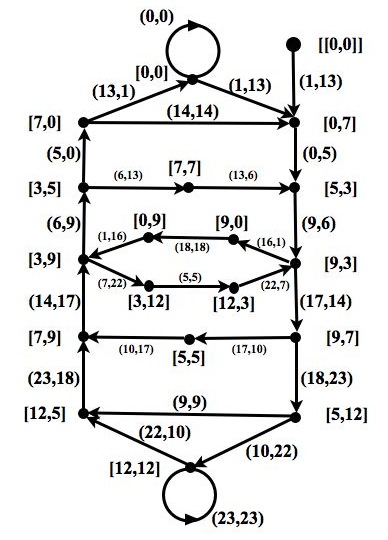}}
\caption{The $(24,13)$ Young graph.}
\label{Fig24_13}
\end{figure}

The second example is the $(24,17)$ Young graph,
which after pruning has 26 nodes and 34 edges, and is
shown in Fig. \ref{Fig24_17}.
Only the node-labels are shown, to avoid making
the diagram too cluttered. The edge-labels can be obtained from 
\eqn{Eq93.5}.
The generating function is 
\beql{GF24.17}
\sC(x) ~=~ \frac{x^{12} (1+x)}{1-x^2-x^{10}-x^{14}-x^{16}}.
\eeq
This is the smallest example of a Young graph that is
not palindromic: there is no value of $\gamma$
such that the reverse multiples divided by $\gamma$ are 
all palindromic in base 24 (one has only to check
the divisors $\gamma$ of the greatest common divisor of the first
few reverse multiples, and none of them work).

\begin{figure}[htb]
\centerline{\includegraphics[width=5.5in]{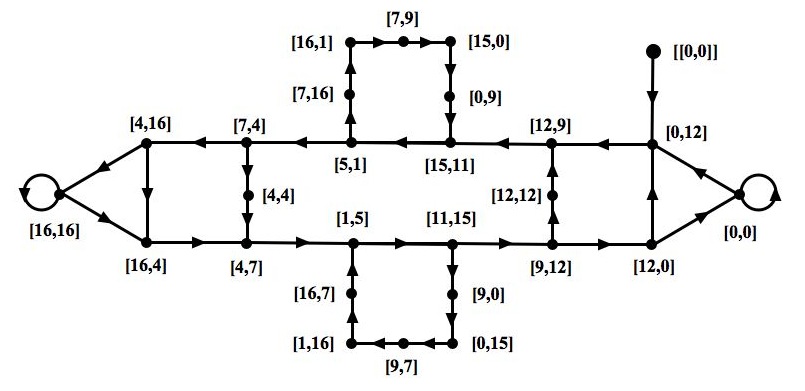}}
\caption{The $(24,17)$ Young graph.}
\label{Fig24_17}
\end{figure}

The third example is the $(40,13)$ Young graph,
with 15 nodes and 22 edges,
shown in Fig. \ref{Fig40_13}, which we mention 
because it is the largest example discussed by
Young \cite[p.~174]{Young2}.
Again only the node-labels are shown.
Figure \ref{Fig40_13} shows the structure
of these $(40,13)$-reverse multiples far more clearly
than Young's tree.
The generating function is 
\beql{GF40.13}
\sC(x) ~=~ \frac{x^5 (1+x)(1-x^2+x^4+2x^6)}{1-x^2-x^8-2x^{10}}.
\eeq
This example is also not palindromic.

\begin{figure}[htb]
\centerline{\includegraphics[width=4.5in]{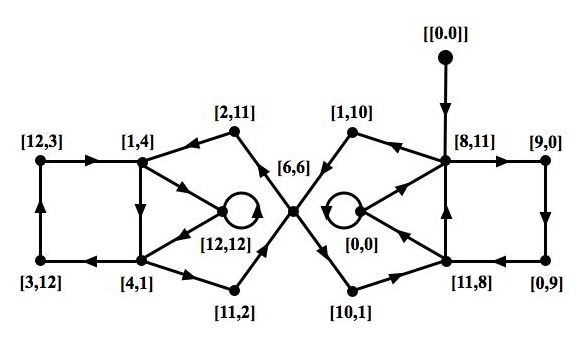}}
\caption{The $(40,13)$ Young graph.}
\label{Fig40_13}
\end{figure}

\section{Open Questions}\label{Sec5}

This extension of Young's work \cite{Young1}, \cite{Young2}
has helped clarify the properties of reverse multiples,
but it also raises a number of questions.
Besides the conjectures in Section \ref{Sec3}, we
mention three other open problems.

(i) Does the underlying directed graph characterize the Young graph?
That is, are there examples of Young graphs which are
isomorphic as directed graphs but not as Young graphs (i.e.,
have non-isomorphic sets of pivot nodes).

To make this precise, let $\sG$ be a Young graph
with underlying unlabeled directed graph $G$.
Let $\Phi$ be the map that acts on 
the labeled nodes and edges of $\sG$ by
sending $[r,s]$ to $[s,r]$, and the edge
$[r,s] \xrightarrow{(a,b)} [t,u]$
to $[u,t] \xrightarrow{(b,a)} [s,r]$.
It follows from (P3) that $\Phi$ is an involution on $\sG$,
and from Theorem \ref{ThC} that the complete graphs
are the only Young graphs that are fixed by $\Phi$.
The map $\Phi$ induces an involution of the directed graph $G$.
The question is, could there be two non-isomorphic Young graphs 
where the underlying directed graphs $G$ are isomorphic,
but where the two maps $\Phi$ act in different ways on $G$?
This seems quite possible, although no examples are presently 
known.

(ii) Which directed graphs can occur as
(the underlying directed graphs of) Young graphs?
Which ones are ``palindromic''?

(iii) Given the base $g$, which values of $k$ can occur
(cf. Table \ref{gkTable})?

\section{Acknowledgments}
Thanks to Gregory Rosenthal and Selma Rosenthal
for asking questions which led to this paper,
and to the referee for many helpful comments.


\medskip

\noindent MSC2010: 11A63

\end{document}